\newtheorem*{maintheorem*}{Main Theorem}
\newtheorem{theorem}{Theorem}[section]
\newtheorem{prop}[theorem]{Proposition}
\newtheorem{question}[theorem]{Question}
\newtheorem{lemma}[theorem]{Lemma}
\newtheorem{cor}[theorem]{Corollary}
\theoremstyle{definition}
\newtheorem{definition}[theorem]{Definition}
\newtheorem{example}[theorem]{Example}
\numberwithin{equation}{section}
\newcommand{\nn}{\mathbb{N}}
\newcommand{\qq}{\mathbb{Q}}
\newcommand{\rr}{\mathbb{R}}
\newcommand{\zz}{\mathbb{Z}}
\newcommand{\pp}{\mathbb{P}}
\newcommand{\uu}{\mathcal{U}}
\newcommand{\mcd}{\text{mcd}}
\newcommand{\ord}{\text{ord}}
\newcommand{\red}{\text{red}}
\providecommand\ldb{\llbracket}
\providecommand\rdb{\rrbracket}
\keywords{semidomain, polynomial semidomain, integral domain, Furstenberg semidomain, almost atomic semidomain, quasi-atomic semidomain, atomic, almost atomic, quasi-atomic}
\subjclass[2010]{Primary: 16Y60, 11C08; Secondary: 20M13, 13F05}
\begin{document}
	
	\mbox{}
	\title{On the subatomicity of polynomial semidomains}
	
	\author{Felix Gotti}
	\address{Department of Mathematics\\MIT\\Cambridge, MA 02139}
	\email{fgotti@mit.edu}
	
	\author{Harold Polo}
	\address{Department of Mathematics\\University of Florida\\Gainesville, FL 32611}
	\email{haroldpolo@ufl.edu}
	
\date{\today}

\begin{abstract}
	A semidomain is an additive submonoid of an integral domain that is closed under multiplication and contains the identity element. Although atomicity and divisibility in integral domains have been systematically investigated for more than thirty years, the same aspects in the more general context of semidomains have been considered just recently. Here we study subatomicity in the context of semidomains, focusing on whether certain subatomic properties ascend from a semidomain to its polynomial extension and its Laurent polynomial extension. We investigate factorization and divisibility notions generalizing that of atomicity. First, we consider the Furstenberg property, which is due to P. Clark and motivated by the work of H. Furstenberg on the infinitude of primes. Then we consider the almost atomic and quasi-atomic properties, both introduced by J. G. Boynton and J. Coykendall in their study of divisibility in integral domains.
\end{abstract}
\bigskip

\maketitle


\bigskip
\section{Introduction}
\label{sec:intro}

A semidomain is an additive submonoid of an integral domain that is closed under multiplication and contains a multiplicative identity element. Let $S$ be a semidomain, and set $S^* := S \setminus \{0\}$; that is, $S^*$ is the multiplicative monoid of~$S$. We say that $S$ is atomic provided that every non-invertible element of $S^*$ can be written as a finite product of atoms (i.e., irreducible elements). Factorizations in atomic domains have been systematically studied for more than three decades, considerably motivated by the landmark paper~\cite{AAZ90} by D. D. Anderson, D. F. Anderson, and M. Zafrullah. However, factorizations in the more general context of atomic semidomains have been investigated just recently by N. R. Baeth, S. T. Chapman, and the first author~\cite{BCG21}. In the present paper, we investigate atomic properties that are weaker than being atomic in the setting of semidomains. We put special emphasis on the ascent of such properties from the semidomain~$S$ to its polynomial extension $S[x]$ and its Laurent polynomial extension $S[x^{\pm 1}]$.
\smallskip

Special cases of polynomial semidomains and Laurent polynomial semidomains have been the focus of a great deal of attention lately in the factorization theory community. For instance, methods to factorize polynomials in the semidomain $\nn_0[x]$ were investigated by H. Brunotte in~\cite{hB13} and, more recently, F. Campanini and A. Facchini in~\cite{CF19} carried out a more systematic investigation of factorizations in $\nn_0[x]$. More generally, semigroup semirings were studied by V. Ponomarenko in~\cite{vP15} from the factorization perspective. The arithmetic of polynomial semidomains with coefficients in the semidomain $\rr_{\ge 0}$ has  also been considered: for instance, P. Cesarz, S. T. Chapman, S. McAdam, and G. J. Schaeffer in~\cite{CCMS09} studied the elasticity of $\rr_{\ge 0}[x]$.
\smallskip

Positive semirings, that is, subsemirings of $\rr_{\ge 0}$, have also been actively studied in the last few years. Factorizations in positive semirings consisting of rational numbers were considered in~\cite{CGG20a} by S.~T.~Chapman, M. Gotti, and the first author, and then in~\cite{ABP21} by S. Albizu-Campos, J. Bringas, and H.~Polo. The same semidomains were studied in~\cite{BG20} by Baeth and the first author in connection with factorizations of matrices. This, in turn, motivated the paper~\cite{BCG21} by Baeth, Chapman, and the first author, where several examples of positive semirings were constructed. Positive semirings can also be produced as valuations of polynomial semidomains and Laurent polynomial semidomains, and such valuations have also been investigated recently: the arithmetic of factorizations of $\nn_0[\alpha]$, where~$\alpha$ is a positive algebraic number, was studied recently for rational valuations in~\cite{CGG20a} by Chapman, Gotti, and the first author and for algebraic valuations in~\cite{CG22} by J. Correa-Morris and the first author, and in~\cite{JLZ22} by N. Jiang, B. Li, and S. Zhu. On the other hand, the atomic structure of the algebraic valuations of the Laurent polynomial semidomain $\nn_0[x^{\pm 1}]$ has been recently studied in~\cite{sZ22} by Zhu.
\smallskip

Following the terminology introduced by P. L. Clark in~\cite{pC17}, we say that the semidomain~$S$ is a Furstenberg semidomain if every nonunit element in $S^*$ is divisible by an atom. It follows from the definitions that each atomic semidomain is a Furstenberg semidomain. Furstenberg domains have been studied by N. Lebowitz-Lockard in~\cite{NL19} in connection with the properties of almost atomicity and quasi-atomicity, which we define in the next two paragraphs. In addition, Furstenberg domains have been recently considered in~\cite{GZ22} by the first author and Zafrullah in connection with idf-domains (i.e., integral domains whose elements have only finitely many irreducible divisors up to associates). Finally, Furstenberg domains have been considered in~\cite[Section~5]{GL22} by Li and the first author in the context of integer-valued polynomials. In Section~\ref{sec:Furstenberg}, we prove that the property of being Furstenberg ascends from the semidomain $S$ to both $S[x]$ and $S[x^{\pm 1}]$. We also construct an example of a Furstenberg semidomain that is neither an integral domain nor an atomic semidomain.
\smallskip

The semidomain $S$ is said to be almost atomic provided that, for every nonunit $b \in S^*$, there exist atoms $a_1, \dots, a_k$ of $S^*$ such that $a_1 \cdots a_k b$ factors into atoms in $S^*$. Observe that each atomic semidomain is almost atomic. The notion of almost atomicity was introduced in~\cite{BC15} by J. G. Boynton and J. Coykendall, and it was later studied by Lebowitz-Lockard~\cite{NL19} in parallel to various other subatomic properties. In Section~\ref{sec:almost atomicity}, we study almost atomicity in the context of semidomains. Unlike the case of the Furstenberg property, we do not know whether the property of being almost atomic ascends from the semidomain $S$ to either $S[x]$ or $S[x^{\pm 1}]$ (see Question~\ref{quest:ascending of almost atomicity}). However, under certain divisibility hypothesis on the coefficients of indecomposable polynomials, we are able to prove that almost atomicity ascends from the semidomain $S$ to both $S[x]$ and $S[x^{\pm}]$. We also provide in Section~\ref{sec:almost atomicity} an example of an almost atomic semidomain that is neither an integral domain nor an atomic semidomain. Finally, we exhibit an example of an antimatter semidomain that is not an integral domain whose polynomial extension is almost atomic.
\smallskip

As the notion of almost atomicity, that of quasi-atomicity was introduced in~\cite{BC15} and further studied in~\cite{NL19} in the context of integral domains. Following the terminology in~\cite{BC15}, we say that the semidomain~$S$ is quasi-atomic provided that, for every nonunit $b \in S^*$, there exists an element $c$ of~$S^*$ such that $bc$ factors into atoms in $S^*$. It follows directly from the definitions that each almost atomic semidomain is quasi-atomic. In Section~\ref{sec:quasi-atomicity}, we investigate quasi-atomicity in semidomains. We begin by constructing a quasi-atomic semidomain that is neither an integral domain nor an almost atomic semidomain. Then we provide a simple ideal-theoretical characterization of quasi-atomic semidomains. As for the property of being almost atomic, we could not determine whether the property of being quasi-atomic ascends from the semidomain $S$ to either $S[x]$ or $S[x^{\pm 1}]$. However, under the same hypothesis used to prove the ascent of almost atomicity, we prove that the property of being quasi-atomic ascends from the semidomain $S$ to both $S[x]$ and $S[x^{\pm 1}]$.

\bigskip
\section{Background}
\label{sec:background}

\medskip
\subsection{General Notation}

In this section, we introduce the notation and terminology necessary to follow our exposition. Reference material on factorization theory and semiring theory can be found in the monographs \cite{GH06} by A. Geroldinger and F. Halter-Koch and \cite{JG1999} by J. Golan, respectively. Throughout this paper, we let $\pp$, $\zz, \qq$, and $\rr$ denote the set of primes, integers, rational numbers, and real numbers, respectively. Additionally, we let $\nn_0$ and $\nn$ denote the set of nonnegative integers and positive integers, respectively. Given $s \in \rr$ and $X \subseteq \rr$, we set
\[
	X_{>s} \coloneqq \{r \in X \mid r>s\} \quad \text{ and } \quad X_{\ge s} \coloneqq \{r \in X \mid r \ge s\}. 
\]
For $m,n \in \zz$, we denote by $\llbracket m,n \rrbracket$ the closed discrete interval from $m$ to $n$; that is,
\[
	\llbracket m,n \rrbracket \coloneqq \{k \in \zz \mid m \leq k \leq n\}.
\]
Observe that $\ldb m, n \rdb$ is empty if $m>n$.

\medskip
\subsection{Monoids}

A \emph{monoid}\footnote{The standard definition of a monoid does not assume the cancellative and the commutative conditions.} is defined here to be a semigroup with an identity element that is both cancellative and commutative. Since our interest lies in the multiplicative structure of certain semirings, we will use multiplicative notation for monoids unless we specify otherwise. For the rest of this section, let $M$ be a monoid with identity~$1$. We set $M^{\bullet} \coloneqq M \setminus \{1\}$, and we let $\mathcal{U}(M)$ denote the group of units (i.e., invertible elements) of~$M$. In addition, we let $M_{\red}$ denote the quotient $M/\mathcal{U}(M)$, which is also a monoid. We say that $M$ is \emph{reduced} provided that $\mathcal{U}(M)$ is the trivial group, in which case, the monoids $M_{\red}$ and $M$ can be naturally identified (and we do so). The \emph{Grothendieck group} of~$M$, denoted here by $\mathcal{G}(M)$, is the abelian group (unique up to isomorphism) satisfying the property that any abelian group containing an isomorphic image of the monoid $M$ also contains an isomorphic image of the group $\mathcal{G}(M)$. The monoid $M$ is called \emph{torsion-free} provided that $\mathcal{G}(M)$ is a torsion-free group. For a subset $S$ of $M$, we let $\langle S \rangle$ denote the smallest submonoid of $M$ containing $S$, and if $M = \langle S \rangle$, then we say that~$S$ generates~$M$.

Puiseux monoids and positive monoids are used several time throughout this paper to construct needed examples. Following \cite{GG17}, we call any additive submonoid of $\qq_{\ge 0}$ a \emph{Puiseux monoid}. The class of Puiseux monoids is, therefore, a natural generalization of the class consisting of numerical monoids (i.e., additive submonoids of $\nn_0$ up to isomorphism). Puiseux monoids account up to isomorphism for all rank-$1$ torsion-free monoids that are not groups (see \cite[Theorem~3.12.1]{GGT21}). Following \cite{fG19}, we call any additive submonoid of $\rr_{\ge 0}$ a \emph{positive monoid}. It follows directly from the definitions that every Puiseux monoid is a positive monoid.  The atomic structure of both Puiseux and positive monoids has been actively investigated during the last few years: see the recent papers \cite{CGG21,GV23} as well as the references therein).

For $b,c \in M$, it is said that $c$ \emph{divides} $b$ \emph{in} $M$ if there exists $d \in M$ such that $b = cd$, in which case we write $c \mid_M b$, dropping the subscript precisely when $M = (\nn, \times)$. We say that $b,c \in M$ are \emph{associates} if $b \mid_M c$ and $c \mid_M b$. The monoid $M$ is called a \emph{valuation monoid} if for all $b,c \in M$ either $b \mid_M c$ or $c \mid_M b$. An element $p \in M \setminus \uu(M)$ is called \emph{prime} if for all $b,c \in M$ the relation $p \mid_M bc$ implies that either $p \mid_M b$ or $p \mid_M c$. A submonoid $N$ of $M$ is called \emph{divisor-closed} if for each $b \in N$ and $c \in M$ the relation $c \mid_M b$ implies that $c \in N$. Let $S$ be a nonempty subset of $M$. An element $d \in M$ is called a \emph{common divisor} of~$S$ provided that $d \mid_M s$ for all $s \in S$. A common divisor $d$ of $S$ is called a \emph{greatest common divisor} of~$S$ provided that $d$ is divisible by all common divisors of $S$. Also, a common divisor of~$S$ is called a \emph{maximal common divisor} provided that every common divisor of the set
\[
	S/d := \Big\{ \frac sd \ \Big{|} \ s \in S \Big\}
\]
belongs to $\uu(M)$; that is, if $d$ divides a common divisor $d'$ of $S$, then $d$ and $d'$ are associates in $M$. We let $\gcd_M(S)$ (resp., $\mcd_M(S)$) denote the set consisting of all greatest common divisors (resp., maximal common divisors) of $S$. The monoid $M$ is called a \emph{GCD-monoid} (resp., an \emph{MCD-monoid}) if each finite nonempty subset of $M$ has a greatest common divisor (resp., a maximal common divisor). It is clear that the inclusion $\gcd_M(S) \subseteq \mcd_M(S)$ holds, whence every GCD-monoid is an MCD-monoid. The converse does not hold in general, as the following example illustrates.

\begin{example}
	Let $M$ be the numerical monoid $\nn_0 \setminus \{1\}$; that is, $M = \langle 2,3 \rangle$. The set of common divisors of $\{5,6\}$ in $M$ is $\{0,2,3\}$. As $2 \nmid_M 3$ and $3 \nmid_M 2$, it follows that $\gcd_M(5,6)$ is empty. However, the only common divisor of both sets $\{5-2, 6-2\}$ and $\{5-3, 6-3\}$ is $0$ and, therefore, $\mcd_M(5,6) = \{2,3\}$. More generally, one can readily argue that every numerical monoid is an MCD-monoid.
\end{example}

An element $a \in M \setminus \uu(M)$ is called an \emph{atom} if for all $b,c \in M$ the equality $a = bc$ implies that either $b \in \uu(M)$ or $c \in \uu(M)$. One can readily verify that every prime element is an atom. We let $\mathcal{A}(M)$ denote the set consisting of all atoms of~$M$. Following P.~Cohn~\cite{pC68}, we say that $M$ is \emph{atomic} if every element of $M \setminus \uu(M)$ can be written as a (finite) product of atoms, while following J. Coykendall, D. Dobbs, and B. Mullins~\cite{CDM99}, we say that $M$ is \emph{antimatter} if $\mathcal{A}(M)$ is empty. One can readily check that~$M$ is atomic (resp., antimatter) if and only if $M_{\red}$ is atomic (resp., antimatter).

Assume throughout this paragraph that~$M$ is an atomic monoid. We let $\mathsf{Z}(M)$ denote the free (commutative) monoid on the set $\mathcal{A}(M_{\red})$. The elements of $\mathsf{Z}(M)$ are called \emph{factorizations}, and if $z = a_1 \cdots a_\ell \in \mathsf{Z}(M)$ for some $a_1, \ldots, a_\ell \in \mathcal{A}(M_{\red})$, then $\ell$ is called the \emph{length} of~$z$, which is denoted by $|z|$. Let $\pi \colon \mathsf{Z}(M) \to M_{\red}$ be the unique monoid homomorphism satisfying that $\pi(a) = a$ for all $a \in\mathcal{A}(M_{\red})$. For each $b \in M$, the sets
\begin{equation} \label{eq:sets of factorizations/lengths}
	\mathsf{Z}_M(b) \coloneqq \pi^{-1}(b \mathcal{U}(M)) \subseteq \mathsf{Z}(M) \hspace{0.6 cm}\text{ and } \hspace{0.6 cm}\mathsf{L}_M(b) \coloneqq \{|z| : z \in\mathsf{Z}_M(b)\} \subseteq \nn_0
\end{equation}
are of crucial importance to study the atomicity of $M$. When there seems to be no risk of ambiguity, we drop the subscript $M$ from the notations in~\eqref{eq:sets of factorizations/lengths}. We say that $M$ is a \emph{unique factorization monoid} (UFM) provided that $|\mathsf{Z}(b)| = 1$ for every $b \in M$. In addition, we say that $M$ is a \emph{bounded factorization monoid} (BFM) provided that $1 \le |\mathsf{L}(b)| < \infty$ for every $b \in M$. It follows from the corresponding definitions that every UFM is a BFM. 

Following the terminology in~\cite{pC17}, we say that the monoid $M$ is \emph{Furstenberg} provided that every nonunit element of $M$ is divisible by an atom. It follows directly from the definitions that every atomic monoid is Furstenberg. On the other hand, following the terminology in~\cite{BC15}, we say that the monoid $M$ is \emph{almost atomic} (resp., \emph{quasi-atomic}) provided that, for every nonunit $c \in M$, there exists $a_1, \dots, a_k \in \mathcal{A}(M)$ (resp., $b \in M$) such that $a_1 \cdots a_k c$ (resp., $bc$) can be written as a product of atoms in $M$. It follows directly from the definitions that every atomic monoid is almost atomic and also that every almost atomic monoid is quasi-atomic.

\medskip
\subsection{Semirings}

A \emph{semiring} $S$ is a nonempty set endowed with two binary operations denoted by `$+$' and `$\cdot$' and called \emph{addition} and \emph{multiplication}, respectively, such that the following conditions hold:
\begin{enumerate}
	\item $(S,+)$ is a monoid with its identity element denoted by $0$;
	\smallskip
	
	\item $(S, \cdot)$ is a commutative semigroup with an identity element denoted by $1$;
	\smallskip
	
	\item $b \cdot (c+d)= b \cdot c + b \cdot d$ for all $b, c, d \in S$.
\end{enumerate}
With notation as in the previous definition and for any $b,c \in S$, we write $b c$ instead of $b \cdot c$ when there seems to be no risk of confusion. It follows from conditions~(1) and~(3) in the definition of a semiring $S$ that $0 \cdot b = 0$ for all $b \in S$. A more general notion of a `semiring' $S$ does not assume that the semigroup $(S, \cdot)$ is commutative. However, this more general type of algebraic objects is not of interest in the scope of this paper. A subset $S'$ of a semiring $S$ is a \emph{subsemiring} of~$S$ if $(S',+)$ is a submonoid of $(S,+)$ that contains~$1$ and is closed under multiplication. Observe that every subsemiring of $S$ is a semiring.

\begin{definition}
	We say that a semiring $S$ is a \emph{semidomain} provided that $S$ is a subsemiring of an integral domain.
\end{definition}

Let $S$ be a semidomain. We set $S^* \coloneqq (S \setminus \{0\}, \cdot)$ and call it the \emph{multiplicative monoid} of $S$. It is worth emphasizing that a semiring $S$ may not be a semidomain even if~$S^*$ is a monoid; for instance, consider $\{(0,0)\} \cup (\nn \times \nn)$ under the usual component-wise addition and multiplication. Following standard notation from ring theory, we refer to the units of the multiplicative monoid $S^*$ simply as \emph{units} of $S$, and we denote the set of units of $S$ by $S^\times$. We never consider in this paper the units of the monoid $(S,+)$, so the use of the term `unit' in the context of the semidomain $S$ should not generate any ambiguity. We write $\mathcal{A}(S)$ instead of $\mathcal{A}(S^*)$ for the set of atoms of the multiplicative monoid $S^*$, while we let $\mathcal{A}_{+}(S)$ denote the set of atoms of the additive monoid $(S,+)$. In addition, for any $b,c \in S$ such that $b$ divides $c$ in $S^*$, we write $b \mid_S c$ instead of $b \mid_{S^*} c$. Finally, for any nonempty subset $T$ of $S^*$, we write $\gcd_S(T)$ (resp., $\mcd_S(T)$) instead of $\gcd_{S^*}(T)$ (resp., $\mcd_{S^*}(T)$). As for the notion of units, throughout this paper we never consider divisibility in the additive monoid of any semidomain.
\smallskip

\begin{lemma} \label{lem:characterization of integral semirings}
	For a semiring $S$, the following conditions are equivalent.
	\begin{enumerate}
		\item[(a)] The multiplication of $S$ extends to $\mathcal{G}(S)$ turning $\mathcal{G}(S)$ into an integral domain. 
		\smallskip
		
		\item[(b)] $S$ is a semidomain.
	\end{enumerate}
\end{lemma}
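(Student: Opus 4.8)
The plan is to connect $S$ with its ambient structure through the canonical additive-monoid homomorphism $\iota \colon S \to \mathcal{G}(S)$ which, by the description of the Grothendieck group recalled in Section~\ref{sec:background}, realizes $(S,+)$ as a submonoid of $\mathcal{G}(S)$. The crux of the argument is to identify $\mathcal{G}(S)$ concretely and to verify that its additive group carries a compatible ring structure precisely when $S$ is a semidomain.

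For the implication (b)$\Rightarrow$(a), I would start from an integral domain $R$ containing $S$ as a subsemiring. Since $(R,+)$ is a group, the inclusion $(S,+) \hookrightarrow (R,+)$ forces $(S,+)$ to be cancellative, and the subgroup of $(R,+)$ generated by $S$ is exactly the difference set $S - S := \{a - b : a, b \in S\}$. I would then check that $S - S$, together with the inclusion of $S$, satisfies the universal property defining $\mathcal{G}(S)$, so that $\mathcal{G}(S) \cong S - S$ as additive groups. The key computation is that $S - S$ is closed under the multiplication of $R$: indeed $(a - b)(c - d) = (ac + bd) - (ad + bc) \in S - S$, and $1 = 1 - 0 \in S - S$. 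Hence $S - S$ is a subring of the integral domain $R$, so it is itself an integral domain, and its multiplication restricts to that of $S$. This yields~(a).

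For (a)$\Rightarrow$(b), I would argue in the reverse direction. Assuming the multiplication of $S$ extends to $\mathcal{G}(S)$ so that $\mathcal{G}(S)$ becomes an integral domain, the canonical map $\iota \colon S \to \mathcal{G}(S)$ is not only a homomorphism of additive monoids but, because the multiplication extends and $\iota(1) = 1$, a homomorphism of semirings. Since $\iota$ embeds $S$ into $\mathcal{G}(S)$, its image $\iota(S)$ is a subsemiring of $\mathcal{G}(S)$ isomorphic to $S$. As $\mathcal{G}(S)$ is an integral domain, $S$ is (isomorphic to) a subsemiring of an integral domain, which is precisely condition~(b).

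The main obstacle I anticipate is the bookkeeping in the realization $\mathcal{G}(S) \cong S - S$: one must confirm both that $S - S$ meets the universal property of the Grothendieck group and that the induced multiplication is well defined on difference classes, i.e.\ independent of the chosen representatives $a - b$. This well-definedness is a routine but slightly tedious verification using the distributive law, and it is exactly what guarantees that the extended operation in~(a) agrees with the ambient multiplication in~(b). Everything else reduces to the standard fact, recalled in Section~\ref{sec:background}, that the canonical map into the Grothendieck group embeds the additive monoid of a semidomain into an abelian group.
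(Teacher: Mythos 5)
Your proposal is correct and takes essentially the same route as the paper: the paper's proof of (b)$\Rightarrow$(a) likewise identifies $\mathcal{G}(S)$ with the difference group $\{r - s \mid r, s \in S\}$ inside an ambient integral domain $R$ and verifies closure under the multiplication inherited from $R$ (your computation $(a-b)(c-d) = (ac+bd) - (ad+bc)$), while (a)$\Rightarrow$(b) is treated as immediate, exactly as in your second paragraph. Since the multiplication is inherited from $R$ rather than defined on difference classes, the well-definedness verification you worry about in your final paragraph is automatic and not needed.
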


\begin{proof}
	(a) $\Rightarrow$ (b): This is clear.
	\smallskip
	
	(b) $\Rightarrow$ (a): Let $S$ be a semidomain, and suppose that $S$ is embedded into an integral domain~$R$. We can identify the Grothendieck group $\mathcal{G}(S)$ of $(S,+)$ with the subgroup $\{r - s \mid r,s \in S\}$ of the underlying additive group of $R$. It is routine to verify that $\mathcal{G}(S)$ is closed under the multiplication it inherits from $R$, and it contains the multiplicative identity because $0,1 \in S$. Hence $\mathcal{G}(S)$ is an integral domain having $S$ as a subsemiring. 
\end{proof}

We say that the semidomain $S$ is \emph{atomic} (resp., \emph{Furstenberg}, \emph{almost atomic}, \emph{quasi-atomic}, a \emph{GCD-semidomain}) if its multiplicative monoid $S^*$ is atomic (resp., Furstenberg, almost atomic, quasi-atomic, a GCD-monoid). Similarly, we say that~$S$ is a \emph{unique factorization semidomain} (UFS) provided that~$S^*$ is a UFM, and we say that $S$ is a \emph{bounded factorization semidomain} (BFS) provided that $S^*$ is a BFM. A subset $I$ of $S$ is an \emph{ideal}\footnote{Golan~\cite{JG1999} defines an ideal in a more restrictive way: if $I$ is an ideal of a semiring $S$, then by definition $I \neq S$. Consequently, any result we cite from \cite{JG1999} is interpreted here as a statement about the proper ideals of a semiring.} of $S$ provided that $(I,+)$ is a submonoid of $(S,+)$ and $I S \subseteq I$. We say that an ideal $I$ is \emph{prime} if $I \neq S$ and, for all $b,c \in S$, the containment $bc \in I$ implies that either $b \in I$ or $c \in I$. Although the semidomain $S$ can be embedded into an integral domain~$R$, the former may not inherit any (sub)atomic property from the latter as, after all, the integral domain~$\qq[x]$ is a UFD but it contains as a subring the integral domain $\zz + x\qq[x]$, which is not even quasi-atomic (see \cite[Lemma~17]{NL19}).

Let $S$ be a semiring. The set consisting of all polynomial expressions with coefficients in the semiring~$S$ is also a semiring, which we denote by $S[x]$ and call the \emph{semiring of polynomials over}~$S$. Additionally, if~$S$ is a semidomain embedded into an integral domain $R$, then it is clear that $S[x]$ is also a semidomain, and the elements of $S[x]$ are, in particular, polynomials in $R[x]$. Consequently, when $S$ is a semidomain all the standard terminology for polynomials can be applied to elements of $S[x]$, including \emph{constant polynomial}, \emph{degree}, \emph{order}, and \emph{leading coefficient}. Observe that $S^*$ is a divisor-closed submonoid of $S[x]^*$ and, therefore, $S[x]^\times = S^\times$ and $\mathcal{A}(S[x]) \cap S = \mathcal{A}(S)$. Following~\cite{mR93}, we say that a nonzero polynomial in $S[x]$ is \emph{indecomposable} if the polynomial cannot be written as a product of two nonconstant polynomials in $S[x]$.

Following the terminology introduced by Baeth, Chapman, and the first author in~\cite{BCG21}, we call a subsemiring of $\rr$ consisting of nonnegative numbers a \emph{positive semiring}. The fact that underlying additive monoids of positive semirings are reduced makes them more tractable. The reader can check~\cite{BCG21} for several examples of positive semirings. The class of semidomains clearly contains those of integral domains and positive semirings.

\bigskip
\section{Furstenbergness}
\label{sec:Furstenberg}

The Furstenberg property is a relaxation of the property of being atomic, and the reader can find interesting examples of non-atomic Furstenberg domains in \cite[Section~5]{GL22} and \cite[Section~4]{NL19}. We now construct an example of a Furstenberg positive semiring that is not atomic. In the construction, we use Lindemann-Weierstrass Theorem from transcendental number theory (see \cite[Chapter~1]{aB90}), which states that, for distinct algebraic numbers $\alpha_1, \ldots, \alpha_n$, the set $\{e^{\alpha_1}, \ldots, e^{\alpha_n}\}$ is linearly independent over the field of algebraic numbers.

\begin{example} \label{ex: non-atomic integral semiring that is Furstenberg}
	Consider the Puiseux monoid $P =  \big\langle \frac 1p \mid p \in \pp \big\rangle$, and set $M := P \cup \qq_{\ge 1}$. It is clear that~$M$ is also a Puiseux monoid. It is well known and not difficult to argue that $\mathcal{A}(P) = \big\{ \frac 1p \mid p \in \pp \big\}$ (see, for instance, \cite[Theorem~4.5]{AGH21}). This, along with the fact that no element of $\qq_{\ge 1}$ divides any of the elements of $\big\{ \frac1p \mid p \in \pp \big\}$ in $M$, implies that $\big\{ \frac 1p \mid p \in \pp \big\} \subseteq \mathcal{A}(M)$. In addition, observe that for any $q \in M_{\ge 1}$ we can pick $p \in \pp$ large enough so that $\frac 1p \mid_M q$. Putting the two previous observations together, we can conclude that
	\[
		\mathcal{A}(M) = \Big\{ \frac 1p \ \Big{|} \ p \in \pp \Big\}.
	\]
	This implies that $M$ is not atomic as, for instance, $\frac54$ cannot be written as a sum of atoms in $M$. On the other hand, it follows from our previous observations that $M$ is a Furstenberg monoid.
	
	Now consider the additive monoid $E(M) \coloneqq \langle e^m \mid m \in M \rangle$, which is free on the set $\{e^m \mid m \in M\}$ by Lindemann-Weierstrass Theorem. It is clear that $E(M) \subseteq \rr_{\ge 0}$. Also, observe that $E(M)$ contains $1$ and is closed under multiplication. As a consequence, $E(M)$ is a positive semiring. In addition, the fact that $M \subseteq \rr_{\ge 0}$ guarantees that $\min E(M)^* = 1$, which in turn implies that the multiplicative monoid $E(M)^*$ is reduced.
	
	We proceed to argue that $E(M)$ is a Furstenberg semidomain that is not atomic. It is clear that $e(M) \coloneqq \{e^m \mid m \in M\}$  is a multiplicative submonoid of $E(M)^*$ and also that $e(M)$ is isomorphic to the Puiseux monoid $M$. Since $M$ is not atomic, $e(M)$ is not atomic. As $M$ consists of algebraic numbers, it follows from \cite[Lemma~2.7]{BCG21}\footnote{Although \cite[Lemma~2.7]{BCG21} is stated for positive monoids, its proof requires the use of Lindemann-Weierstrass Theorem and, therefore, it requires that the positive monoid $M$ consists of algebraic numbers.} that $e(M)$ is a divisor-closed submonoid of $E(M)^*$. Therefore the semidomain $E(M)$ is not atomic. 
	
	To argue that $E(M)$ is Furstenberg, take a nonunit $r \in E(M)^*$ (that is, $r \in E(M)^* \setminus \{1\}$) and write $r = c_1 e^{q_1} + \dots + c_k e^{q_k}$, where $c_1, \ldots, c_k \in \nn$ and $q_1, \ldots, q_k \in M$.  We split the rest of the argument into the following two cases.
	\smallskip
	
	\textsc{Case 1:} There exists a positive common divisor $d$ of the elements $q_1, \ldots, q_k$ in $M$. In this case, we can factor $r$ in $E(M)^*$ as $r = e^d (c_1 e^{q_1 - d} + \dots + c_k e^{q_k - d})$. Because $d > 0$ and $M$ is a Furstenberg monoid, there exists $a \in \mathcal{A}(M)$ such that $a \mid_M d$. Therefore the fact that $e(M)$ is a divisor-closed submonoid of $E(M)^*$ guarantees that $e^a$ is an atom of $E(M)$ such that $e^a \mid_{E(M)} r$.
	\smallskip
	
	\textsc{Case 2:} The only common divisor of the elements $q_1, \ldots, q_k$ in $M$ is $0$. In this case, we can choose nonunits $s_1, \dots, s_m \in E(M)^*$ satisfying that $r = s_1 \cdots s_m$. For each $i \in \ldb 1,m \rdb$, let $\ell_i$ be the length of~$s_i$ in the underlying free commutative monoid of $E(M)$. Since no element of the form $e^d$ with $d \in M^{\bullet}$ divides any of the factors $s_1, \dots, s_m$ in $E(M)^*$, we see that $2 \le \ell_i \le s_i$ for every $i \in \ldb 1,m \rdb$. Hence from the fact that $2^m \le s_1 \cdots s_m = r$, we deduce that $m \le \log_2 r$. Now, we can assume that $m$ has been taken as large as it can possibly be to conclude that $s_1$ is an atom of $E(M)$ such that $s_1 \mid_{E(M)} r$.
	\smallskip
	
	In any case, $r$ is divisible by an atom in $E(M)$. Thus, we can conclude that $E(M)$ is a Furstenberg semidomain that is not atomic.
\end{example}

Next we prove that the Furstenberg property ascends from a semidomain to both its polynomial extension and its Laurent polynomial extension.

\begin{theorem} \label{theorem: Furstenberg}
	For a semidomain $S$, the following statements are equivalent.
	\begin{enumerate}
		\item[(a)] $S$ is Furstenberg.
		\smallskip
		
		\item[(b)] $S[x]$ is Furstenberg.
		\smallskip
		
		\item[(c)] $S[x^{\pm 1}]$ is Furstenberg.
	\end{enumerate}
\end{theorem}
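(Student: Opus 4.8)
The plan is to prove the three statements equivalent by first establishing $(a)\Leftrightarrow(b)$, which carries the real content, and then deducing $(b)\Leftrightarrow(c)$ by passing to the order-$0$ normal form of a Laurent polynomial. Throughout I would freely use the facts recorded in Section~\ref{sec:background}: that $S^*$ is a divisor-closed submonoid of $S[x]^*$, that $S[x]^\times=S^\times$, and that $\mathcal{A}(S[x])\cap S=\mathcal{A}(S)$ (so in particular $\mathcal{A}(S)\subseteq\mathcal{A}(S[x])$); I would also note that $S[x^{\pm1}]$ is again a semidomain, since it embeds into $R[x^{\pm1}]$ whenever $S\hookrightarrow R$ with $R$ an integral domain, and that the degree and the order are additive there. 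The implication $(b)\Rightarrow(a)$ is then immediate: a nonunit $s\in S^*$ is a nonunit of $S[x]^*$, so Furstenbergness of $S[x]$ yields an atom $f\mid_{S[x]}s$; additivity of degree forces $f$ to be constant, whence $f\in\mathcal{A}(S[x])\cap S=\mathcal{A}(S)$ with $f\mid_S s$.

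The heart of the argument is $(a)\Rightarrow(b)$. Given a nonunit $f\in S[x]^*$, if $\deg f=0$ then $f$ is a nonunit of $S^*$ and the Furstenbergness of $S$ produces a constant atom dividing it, which remains an atom of $S[x]$. If $\deg f\ge 1$, I would choose a divisor $g\mid_{S[x]}f$ of minimal positive degree $m$ (the set of positive degrees of divisors is a nonempty subset of $\nn$, so the minimum exists). Minimality forces $g$ to be indecomposable, since a factorization $g=g_1g_2$ with both factors nonconstant would exhibit a divisor of $f$ of degree strictly between $0$ and $m$. Now I distinguish two cases according to the content of $g$. If the nonzero coefficients of $g$ admit a common nonunit divisor $c\in S^*$, then by Furstenbergness of $S$ there is an atom $a\in\mathcal{A}(S)$ with $a\mid_S c$; as $a$ then divides every coefficient of $g$, we get $a\mid_{S[x]}g\mid_{S[x]}f$ with $a\in\mathcal{A}(S[x])$. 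Otherwise the coefficients of $g$ share no nonunit common divisor, and then $g$ itself is an atom: any factorization of $g$ into two nonunits must, by indecomposability, have a constant nonunit factor, which would be a common nonunit divisor of the coefficients, a contradiction. In either case $f$ is divisible by an atom of $S[x]$.

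For the Laurent equivalence I would use that every nonzero $\phi\in S[x^{\pm1}]$ factors uniquely as $\phi=x^k f$ with $k\in\zz$ and $f\in S[x]$ of order $0$; since $x$ is a unit of $S[x^{\pm1}]$, the elements $\phi$ and $f$ are associates there, and $\phi$ is a nonunit if and only if $f$ is a nonunit of $S[x]$. The same normal form, together with additivity of the order and the fact that an element of $S[x]$ is a unit of $S[x^{\pm1}]$ exactly when it lies in $S^\times$, shows that an order-$0$ polynomial $a\in S[x]$ is an atom of $S[x]$ if and only if it is an atom of $S[x^{\pm1}]$ (a factorization in either ring can be transported to the other after clearing the unit power of $x$ and matching orders). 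Granting this, $(b)\Rightarrow(c)$ follows: writing $\phi=x^k f$ and applying $(b)$ to the nonunit $f$ yields an atom $a\mid_{S[x]}f$, which has order $0$ because $f$ does, hence is an atom of $S[x^{\pm1}]$ dividing $\phi$. The reverse $(c)\Rightarrow(b)$ is symmetric: a nonunit $f\in S[x]^*$ of positive order is divisible by the atom $x$, while one of order $0$ is a nonunit of $S[x^{\pm1}]$, to which $(c)$ supplies an atom that, after normalizing to order $0$, is an atom of $S[x]$ dividing $f$.

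The step I expect to require the most care is the content analysis inside $(a)\Rightarrow(b)$. The naive idea of repeatedly stripping constant factors off a minimal-degree divisor need not terminate when $S$ fails to be atomic, so it is essential to invoke the Furstenberg property of $S$ \emph{directly} on a common divisor of the coefficients in order to extract a single constant atom, rather than attempting to factor the content completely. Once that idea is in place, the remaining work is purely bookkeeping with degrees, orders, and the unit/atom correspondences among $S$, $S[x]$, and $S[x^{\pm1}]$.
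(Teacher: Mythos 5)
Your proposal is correct and takes essentially the same route as the paper: the ascent $(a)\Rightarrow(b)$ reduces to an indecomposable nonconstant divisor of $f$ and applies the Furstenberg property of $S$ to a constant nonunit divisor of its coefficients (the paper extracts the indecomposable factor via a maximal-length factorization $f = r g_1 \cdots g_m$ into nonconstant polynomials rather than your minimal-degree divisor, but the key content step is identical), and the Laurent direction rests on the same order-$0$ normalization $\phi = x^k f$ transferring atoms between $S[x]$ and $S[x^{\pm 1}]$. The only organizational difference is that you close the cycle via $(b)\Rightarrow(a)$ and $(c)\Rightarrow(b)$, whereas the paper proves $(c)\Rightarrow(a)$ directly by observing that $\{sx^n \mid s \in S^* \text{ and } n \in \zz\}$ is a divisor-closed submonoid of $S[x^{\pm 1}]^*$ whose reduced monoid is isomorphic to that of $S^*$; both closures are routine and your atom-transfer lemma for order-$0$ polynomials is a correct (two-sided) sharpening of the one-sided observation the paper uses.
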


\begin{proof}
	(a) $\Rightarrow$ (b): Suppose that $S$ is a Furstenberg semidomain. Take a nonzero nonunit $f \in S[x]$. Suppose first that $f \in S$. Then the fact that $S^*$ is a Furstenberg monoid guarantees the existence of $a \in \mathcal{A}(S)$ with $a \mid_{S} f$. As $S^*$ is a divisor-closed submonoid of $S[x]^*$, it follows that $a$ is also an atom of $S[x]$, and so $f$ is divisible by an atom in $S[x]$. Suppose now that $\deg f \ge 1$. Take the largest $m \in \nn$ such that $f = r g_1 \cdots g_m$ for some $r \in S^*$ and $g_1, \dots, g_m \in S[x]$ with $\deg g_i \ge 1$ for every $i \in \ldb 1,m \rdb$. If $g_1 \in \mathcal{A}(S[x])$ we are done. If $g_1$ is reducible, then the maximality of $m$ guarantees that $g_1 = s(g_1/s)$ for some nonunit element $s \in S^*$ dividing $g_1$ in $S[x]^*$. Because $s$ is a nonunit of $S^*$ and $S^*$ is a Furstenberg monoid, $s$ must be divisible by an atom $b$ in $S^*$. Since $S^*$ is a divisor-closed submonoid of $S[x]^*$, we see that $b$ is an atom of $S[x]$ that divides $f$ in $S[x]^*$. Hence $S[x]$ is also a Furstenberg semidomain.
	\smallskip
	
	(b) $\Rightarrow$ (c): First, observe that every atom $a$ in $S[x]$ with $\text{ord} \, a = 0$ (i.e., every atom not in the set $\{ux \mid u \in S^\times\}$) is an atom in $S[x^{\pm 1}]$. Now take a nonzero nonunit $g \in S[x^{\pm 1}]$, and write $g = x^d h$ for some $d \in \zz$ and $h \in S[x]$ with $\text{ord} \, h = 0$. As $g$ is not a unit in $S[x^{\pm 1}]$, we see that $h$ is not a unit in $S[x]$, and so there is an $a \in \mathcal{A}(S[x])$ such that $a \mid_{S[x]} h$. Note that  $\text{ord} \, a = 0$ because the same holds for~$h$. Thus, $a$ is an atom in $S[x^{\pm 1}]$ dividing $g$. Therefore $S[x^{\pm 1}]$ is also a Furstenberg semidomain.
	\smallskip
	
	(c) $\Rightarrow$ (a): This follows from the fact that $\{sx^n \mid s \in S^* \text{ and } n \in \zz\}$ is a divisor-closed submonoid of $S[x^{\pm 1}]^*$ whose reduced monoid is isomorphic to that of $S^*$.
\end{proof}

Observe that Theorem~\ref{theorem: Furstenberg} can help us identify Furstenberg semidomains that are not atomic. For instance, M. Roitman~\cite{mR93} provided the first example of an atomic domain $D$ such that $D[x]$ is not atomic. By virtue of Theorem~\ref{theorem: Furstenberg}, we can now assert that $D[x]$ is a non-atomic Furstenberg domain.

\bigskip
\section{Almost Atomicity}
\label{sec:almost atomicity}

In this section, we focus on the property of being almost atomic which, as that of being Furstenberg, is a property weaker than being atomic. As the next example illustrates, there are almost atomic semidomains (indeed positive semirings) that are not atomic. The notion of a greatest-divisor submonoid will play a useful role in the next example. Let $M$ be a monoid and let $N$ be a submonoid of $M$. For each $m \in M$, a \emph{greatest divisor} of $m$ in $N$ is an element $d \in N$ such that $d \mid_M m$ and if $d' \mid_M m$ for some $d' \in N$, then $d' \mid_N d$. Following~\cite{GL23}, we say that $N$ is a \emph{greatest-divisor submonoid} of $M$ provided that every element of $M$ has a greatest divisor in $N$.

\begin{example} \label{ex: almost atomic does not imply atomic}
	 Let $(p_n)_{n \ge 1}$ be the strictly increasing sequence consisting of all primes greater than $4$, and then consider the following sets: 
	\[
		A := \bigg\{ \frac{1}{p_n},\, \frac{1}{2^{n + 2}} + \frac{1}{2} - \frac{1}{p_n} \ \Big| \ n \in \nn \bigg\} \quad \text{ and } \quad B := \bigg\{ \frac1{2^{n + 2}} \ \Big| \ n \in \nn \bigg\}.
	\]
	Now set $M := \langle A \cup B \rangle$, and let us argue the following claim.
	\smallskip
	
	\noindent \textit{Claim 1:}  $\mathcal{A}(M) = A$.
	\smallskip
	
	\noindent \textit{Proof of Claim 1:} It is clear that none of the elements of $B$ is an atom of $M$. Therefore $\mathcal{A}(M) \subseteq A$. Before arguing the reverse inclusion, set
	\[
		a_n := \frac1{p_n} \quad \text{ and } \quad a'_n := \frac{1}{2^{n + 2}} + \frac{1}{2} - \frac{1}{p_n}
	\]
	for every $n \in \nn$. Fix $m \in \nn$. First, write $a_m = \sum_{i=1}^k (c_i a_i + c'_i a_i)$ for some $k \in \nn_{\ge m}$ and coefficients $c_1, \dots, c_k, c'_1, \dots, c'_k \in \nn_0$. Since $p_m > 4$, it follows that $a'_m > a_m$ and, therefore, $c'_m = 0$. Now, after applying $p_m$-adic valuation to both sides of the equality $a_m = \sum_{i=1}^k (c_i a_i + c'_i a_i)$, we find that $c_m \ge 1$, which implies that $c_m = 1$ and $c_i = c'_i = 0$ for every $i \in \ldb 1,k \rdb \setminus \{m\}$. Hence $a_m \in \mathcal{A}(M)$. Now write $a'_m = \sum_{i=1}^\ell (d_i a_i + d'_i a_i)$ for some $\ell \in \nn_{\ge m}$ and coefficients $d_1, \dots, d_\ell, d'_1, \dots, d'_\ell \in \nn_0$. If $d'_m = 0$, then we can write
	\begin{equation} \label{eq:auxy}
		\frac{1}{2^{m + 2}} + \frac{1}{2} - \frac{1 + d_m}{p_m} = \sum_{i \in \ldb 1, \ell \rdb \setminus \{m\}} \! \! \! (d_i a_i + d'_i a'_i).
	\end{equation}
	After applying $p_m$-adic valuation to both sides of the equality~\eqref{eq:auxy}, we find that $p_m \mid 1 + d_m$, which is not possible as $ \sum_{i \in \ldb 1, \ell \rdb \setminus \{m\}} (d_i a_i + d'_i a'_i) \ge 0$. Therefore $d'_m \ge 1$, which implies that $d'_m = 1$ and the rest of the coefficients in $\sum_{i=1}^\ell (d_i a_i + d'_i a_i)$ equal $0$. Hence $a'_m \in \mathcal{A}(M)$. As a result, the inclusion $A \subseteq \mathcal{A}(M)$ also holds, and the claim follows.
	\smallskip
	
	We can argue now that $M$ is almost atomic but not atomic. To show that $M$ is not atomic, first observe that for every $n \in \nn$ the fact that $p_n > 4$ implies that $a'_n > \frac1{2^{n+2}} + \frac14 > \frac14$. As a result, $\frac18$ cannot be divisible in $M$ by $a'_n$ for any $n \in \nn$. Therefore, if $M$ were atomic, then $\frac18 \in \big\langle \frac1{p_n} \mid n \in \nn \big\rangle$, which is not possible because every element in $\big\langle \frac1{p_n} \mid n \in \nn \big\rangle$ has nonnegative $2$-adic valuation. Thus,~$M$ cannot be atomic. Let us show now that $M$ is almost atomic. To do so, fix a nonzero $q \in M$. Now write $q = c \frac1{2^{n+1}} + q'$ for some $c \in \nn_0$, $n \in \nn$, and $q' \in M$ such that $q'$ can be written as a sum of atoms in $M$. Thus,
	\[
		c+q = 2c \bigg(\frac1{2^{n+2}} + \frac12 \bigg) + q' = 2c(a_n + a'_n) + q'.
	\]
	Therefore $c+q$ can be written as a sum of atoms in $M$. It is clear, on the other hand, that if $c \neq 0$, then $c \in \nn \subseteq M$ can also be written as a sum of atoms in $M$: indeed, $c = (c p_n) a_n$. As a consequence,~$M$ is almost atomic.
	
	Now set $N := \langle B \rangle$, and observe that $N$ is a valuation monoid: indeed, $N = \nn_0\big[ \frac12\big]$. Fix $q \in M$. Since $M = \langle A \cup B \rangle$, we can write each $q$ as follows:
	\begin{equation} \label{eq:canonical decomposition}
		q = c_0 + \sum_{n \in \nn} (c_n a_n + c'_n a'_n),
	\end{equation}
	for some $c_0 \in N$ and sequences $(c_n)_{n \ge 1}$ and $(c'_n)_{n \ge 1}$ of nonnegative integers, where all but finitely many terms of each sequence equal $0$. Among all decompositions as the one in~\eqref{eq:canonical decomposition}, assume we have chosen one minimizing $\sum_{n \in \nn} (c_n + c'_n)$. We claim that $c_n, c'_n \in \ldb 0, p_n - 1 \rdb$ for every $n \in \nn$. To argue this, note that if $c_j \ge p_j$ for some $j \in \nn$, then we could replace in~\eqref{eq:canonical decomposition} the terms $c_0$ and $c_j a_j$ by the terms $c_0 + p_j a_j$ and $(c_j - p_j) a_j$, respectively, to obtain another decomposition of $q$ whose existence violates the minimality of $\sum_{n \in \nn} (c_n + c'_n)$ (note that $p_j a_j \in N$). A similar argument shows that there cannot be any $j \in \nn$ with $c'_j \ge p_j$. Thus, a decomposition of $q$ as in~\eqref{eq:canonical decomposition} with $c_n, c'_n \in \ldb 0, p_n-1 \rdb$ for every $n \in \nn$ exists. We call such a decomposition a \emph{canonical decomposition} of $q$ and we call $c_0$ the \emph{dyadic summand} of the same canonical decomposition.
	\smallskip
	
	\noindent \textit{Claim 2:} Each $q \in M$ has finitely many canonical decompositions.
	\smallskip
	
	\noindent \textit{Proof of Claim 2:} This is clear when $q = 0$. Now pick an arbitrary nonzero element $q \in M$, and assume that $q$ has a canonical decomposition as that shown in~\eqref{eq:canonical decomposition}. Take $\ell \in \nn$ such that $p_\ell$ is greater than any prime dividing the denominator of~$q$. Fix $j \in \nn$, and observe that the summand $c_j a_j + c'_j a'_j$ is nonzero if and only if the $p_j$-adic valuation of $q$ is negative if and only if $p_j$ divides the denominator of~$q$: this is because the $p_j$-adic valuation of $c_n a_n + c'_n a'_n$ is nonnegative for every $n \neq j$. Therefore we can write the canonical decomposition in~\eqref{eq:canonical decomposition} as $q = c_0 + \sum_{n=1}^\ell (c_n a_n + c'_n a'_n)$. Since $c_0 \in N$ is completely determined by the sum $\sum_{n=1}^\ell (c_n a_n + c'_n a'_n)$, the restrictions $c_n, c'_n \in \ldb 0, p_n - 1 \rdb$ for every $n \in \ldb 1, \ell \rdb$ now imply the existence of only finitely many canonical decompositions for $q$. As $q$ was arbitrarily chosen, the claim is established.
	\smallskip
	
	\noindent \textit{Claim 3:} $N$ is a greatest-divisor submonoid of $M$.
	\smallskip
	
	\noindent \textit{Proof of Claim 3:} Fix $q \in M$. Observe that if $d \in N$ divides $q$ in $M$, then after adding $d$ to any canonical decomposition of $q-d$, we obtain a canonical decomposition of $q$ whose dyadic summand is at least $d$, and so divisible by $d$ in $N$ in light of the fact that the Puiseux monoid $N$ is a valuation monoid. Thus, every element of $N$ dividing $q$ in $M$ also divides the dyadic summand of some canonical decomposition of $q$. As a result, the fact that $q$ has only finitely many dyadic summands (by Claim~2) now implies that the maximum of all such dyadic summands is the greatest divisor of $q$ in $N$ (once again we are using that $N$ is a valuation monoid). Hence $N$ is a greatest-divisor submonoid of $M$, and the claim is established.
	\smallskip
	
	It is worth emphasizing that the fact that $M$ is a reduced monoid guarantees that every element of~$M$ has a unique greatest divisor in $N$.
	\smallskip
	
	\noindent \textit{Claim 4:} $M$ is an MCD-monoid.
	\smallskip
	
	\noindent \textit{Proof of Claim 4:} First, let us argue that if $q \in M$ and $d \in N$ such that $d$ is the greatest divisor of $q$ in~$N$, then the element $q - d$ has finitely many factorizations in $M$. Fix $q \in M$ and let $d \in N$ be the greatest divisor of $q$ in $N$. The statement follows immediately if $q \in N$. So assume that $q \in M \setminus N$, which implies that $q-d > 0$. Fix $j \in \nn$ such that $p_j$ does not divide the denominator of $q-d$, and let $c_j a_j + c'_j a'_j$ be a divisor of $q-d$ in $M$ maximizing the sum $c_j + c'_j$. As $a_j + a'_j \in N^\bullet$, the maximality of~$d$ guarantees that $c_j c'_j = 0$. Thus, if $c_j \ge 1$, then as the $p_j$-adic valuation of $q-d$ is nonnegative, it follows that $p_j \mid c_j$ and so $c_j a_j = (c_j - p_j) a_j + p_j a_j \in (c_j - p_j) a_j + N^\bullet$, which is not possible given the maximality of $d$. In the same way, we can argue that the inequality $c'_j \ge 1$ is not possible. Hence $c_j = c'_j = 0$, and so the maximality of $c_j + c'_j$ ensures that neither $a_j$ nor $a'_j$ divide $q-d$ in $M$. Thus, $q-d$ is divisible by only finitely many atoms in $M$, which implies that $q-d$ has finitely many factorizations in $M$.
	
	We are in a position now to show that $M$ is an MCD-monoid. To do this, suppose that $q_1, \dots, q_m$ are pairwise distinct nonzero elements in $M$. Now let $d_1, \dots, d_m \in N$ be the greatest divisors of $q_1, \dots, q_m$ in $N$, respectively. After relabeling if necessary, we can assume that $d_1 = \min\{d_i \mid i \in \ldb 1,m \rdb\}$. Since $N$ is a valuation monoid, it is clear that $d_1$ is a common divisor of $q_1, \dots, q_m$ in $M$. In addition, as $d_1$ is the greatest divisor of $q_1$ in $N$, the element $q_1 - d_1$ is not divisible by any elements of $N$ in $M$. This, together with the fact that $q_1 - d_1$ has only finitely many factorizations in $M$, implies that $q_1 - d_1$ has only finitely many divisors in $M$. Thus, after setting
	\[
		d_1' := \max \big\{ d \in M \mid \, d \mid_M q_i - d_1 \ \text{ for every } \ i \in \ldb 1,m \rdb \big\},
	\]
	we find that $d_1 + d'_1$ is a maximal common divisor of $q_1, \dots, q_m$ in $M$. Therefore $M$ is a an MCD-monoid, and the claim is established.
	\smallskip
	
	Finally, we will prove that the positive semiring $E(M) := \big\langle e^m \mid m \in M \big\rangle$ (constructed as in Example~\ref{ex: non-atomic integral semiring that is Furstenberg}) is almost atomic but not atomic. Since $M$ consists of algebraic numbers, it follows from \cite[Lemma~2.7]{BCG21} that $e(M) := \big\{ e^m \mid m \in M \big\}$ is a divisor-closed submonoid of the multiplicative monoid $E(M)^*$. Since $e(M)$ is isomorphic to $M$, which is not an atomic monoid, we find that $E(M)$ is not an atomic semidomain.
	
	It only remains to prove that $E(M)$ is an almost atomic semidomain. To do so, let $r$ be a nonunit element in $E(M)^*$, and write $r = c_1 e^{q_1} + \cdots + c_k e^{q_k}$, where $c_1, \ldots, c_k \in \nn$ and $q_1, \ldots, q_k \in M$ with $q_1 < \dots < q_k$. We split the rest of our argument into the following two cases.
	\smallskip
	
	\textsc{Case 1:} $k=1$. From the linear independence of the set $\big\{ e^m \mid m \in M \big\}$ over the field of algebraic numbers (due to Lindemann-Weierstrass Theorem), we can readily deduce that every (standard) prime is an atom of $E(M)^*$. Hence $c_1$ can be written as a product of atoms in $E(M)^*$. In addition, we can use the fact that $M$ is almost atomic to pick an element $b \in \langle \mathcal{A}(M) \rangle$ so that the element $b+q_1$ can be written as a sum of atoms in $M$. Therefore $e^b$ factors into atoms in $E(M)^*$ and, moreover, we can write $e^b r = e^b(c_1 e^{q_1}) = c_1 e^{b+ q_1}$ as a product of atoms in $E(M)^*$.
	\smallskip
	
	\textsc{Case 2:} $k \ge 2$. Since every (standard) prime is an atom of $E(M)^*$, we can assume, without loss of generality, that $c_1, \ldots, c_k$ are relatively prime positive integers. As $M$ is an MCD-monoid by Claim~4, we can take $d \in \mcd_M(q_1, \ldots, q_k)$. Now write
	\[
		r = e^d (c_1 e^{q_1 - d} + \dots + c_k e^{q_k - d}) = e^d s_1 \cdots s_m
	\]
	for some $m \in \nn$ and $s_1, \dots, s_m \in E(M)^* \setminus e(M)$ (this is possible because $k \ge 2$). Since no element of the form $e^{d'}$ with $d' \in M^{\bullet}$ divides any of the factors $s_1, \dots, s_m$ in $E(M)$, the inequality $2 \le s_i$ holds for every $i \in \ldb 1, m \rdb$. Therefore $2^m \le s_1 \cdots s_m = e^{-d} r$, which implies that $m \le \log_2 e^{-d} r$. Now, we can assume that~$m$ has been taken as large as it can possibly be to obtain that $s_1, \dots, s_m \in \mathcal{A}(E(M))$. Since~$M$ is almost atomic, there exists $b \in \langle \mathcal{A}(M) \rangle$ such that $b + d$ can be written as a sum of atoms in~$M$. As a consequence, both $e^b$ and $e^b r = e^{b+d}s_1 \cdots s_m$ factor into atoms in $E(M)$. Hence we conclude that $E(M)$ is almost atomic.
\end{example}

It was proved by Roitman in \cite[Proposition~1.1]{mR93} that the property of being atomic ascends from an integral domain to its polynomial extension provided that the coefficients of any indecomposable polynomial have a maximal common divisor (Roitman also proved in the same paper that the property of being atomic does not ascend, in general, from integral domains to their corresponding polynomial extensions). This result by Roitman was recently generalized by the authors to the context of semidomains (see \cite[Theorem~3.1]{GP23}). Under the same hypothesis, we can prove that the property of being almost atomic ascends from a semidomain to both its polynomial extension and its Laurent polynomial extension.

\begin{theorem} \label{thm:ascent of almost atomicity}
	For a semidomain $S$, each of the following statements implies the next one.
	\begin{enumerate}
		\item[(a)] $S$ is almost atomic and, for any indecomposable polynomial of the form $\sum_{i=0}^n c_i x^i \in S[x]^*$, the set $\emph{\mcd}_S(c_0, \dots, c_n)$ is nonempty.
		\smallskip
		
		\item[(b)] $S[x]$ is almost atomic.
		\smallskip
		
		\item[(c)] $S[x^{\pm 1}]$ is almost atomic. 
	\end{enumerate}
	Moreover, conditions (b) and (c) are equivalent.
\end{theorem}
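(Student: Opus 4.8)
The plan is to establish the implications (a)$\Rightarrow$(b), (b)$\Rightarrow$(c), and (c)$\Rightarrow$(b); the last two give the asserted equivalence of (b) and (c). Throughout I would lean on three structural facts about atoms. First, since $S^*$ is a divisor-closed submonoid of $S[x]^*$, every atom of $S$ is an atom of $S[x]$. Second, as noted in the proof of Theorem~\ref{theorem: Furstenberg}, an atom of $S[x]$ of positive order must have the form $ux$ with $u \in S^\times$; consequently every order-zero atom of $S[x]$ remains an atom of $S[x^{\pm 1}]$, and writing a nonunit of $S[x^{\pm 1}]$ as $x^d h$ with $\ord h = 0$ shows that the atoms of $S[x^{\pm 1}]$ are precisely the $S[x^{\pm 1}]$-associates of the order-zero atoms of $S[x]$. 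Third, by the very definition of maximal common divisor, if $d$ is a maximal common divisor of the coefficients of some $g \in S[x]^*$, then the only common divisors of the coefficients of $g/d$ are units.

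For (a)$\Rightarrow$(b), take a nonunit $f \in S[x]^*$. The case $f \in S$ is immediate: almost atomicity of $S$ yields atoms $a_1, \dots, a_k \in \mathcal{A}(S) \subseteq \mathcal{A}(S[x])$ with $a_1 \cdots a_k f$ a product of atoms of $S^*$, hence of $S[x]^*$. When $\deg f \ge 1$, I would write $f = r\, g_1 \cdots g_m$ with $r \in S^*$ and each $g_i$ nonconstant, taking $m$ maximal (possible since $m \le \deg f$); maximality then forces each $g_i$ to be indecomposable. For each $g_i$ the hypothesis provides a maximal common divisor $d_i \in S^*$ of its coefficients, and $h_i := g_i/d_i$ is an indecomposable polynomial whose coefficients have only unit common divisors. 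Such an $h_i$ is an atom of $S[x]$: a factorization into two nonconstant factors would contradict indecomposability, and a nonunit constant factor would be a nonunit common divisor of the coefficients of $h_i$. Collecting constants, $f = c\, h_1 \cdots h_m$ with $c := r\, d_1 \cdots d_m \in S^*$ and each $h_i \in \mathcal{A}(S[x])$. If $c$ is a unit we are done; otherwise almost atomicity of $S$ supplies atoms $a_1, \dots, a_k$ with $a_1 \cdots a_k c$ a product of atoms of $S^*$, and then $a_1 \cdots a_k f$ is a product of atoms of $S[x]$.

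For the equivalence of (b) and (c), the shared device is the decomposition $g = x^d h$ of a nonunit of $S[x^{\pm 1}]^*$ with $\ord h = 0$; here $g$ and $h$ are associates in $S[x^{\pm 1}]$ and $h$ is a nonunit of $S[x]$. For (b)$\Rightarrow$(c), apply almost atomicity of $S[x]$ to $h$ to get atoms whose product with $h$ factors into atoms of $S[x]$, then pass to $S[x^{\pm 1}]$: the atoms of the form $ux$ collapse to units, while the order-zero atoms survive, so absorbing the residual unit together with the factor $x^d$ into a surviving atom completes $g$. For (c)$\Rightarrow$(b), apply almost atomicity of $S[x^{\pm 1}]$ to $h$ to obtain an identity $\gamma_1 \cdots \gamma_p\, h = \delta_1 \cdots \delta_q$ among atoms of $S[x^{\pm 1}]$. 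Replacing each $\gamma_i, \delta_j$ by an order-zero atom of $S[x]$ to which it is associate introduces a single net unit $w x^N$ of $S[x^{\pm 1}]$; the crux is that the left-hand side now has order zero, so comparing orders forces $N = 0$, and the whole identity lives in $S[x]$ with $w \in S^\times$. Reattaching $x^d$, an atom of $S[x]$, then exhibits $\gamma_1' \cdots \gamma_p'\, f$ as a product of atoms of $S[x]$.

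The main obstacle is precisely this order-balancing step in (c)$\Rightarrow$(b): a factorization manufactured in the localization $S[x^{\pm 1}]$ may exploit the invertibility of $x$ and need not a priori descend to $S[x]$. Passing to order-zero representatives and using that a product of order-zero elements has order zero is exactly what confines the stray unit to $S^\times$ and forces descent. By contrast, once the maximal-common-divisor hypothesis is invoked to certify that primitive indecomposable polynomials are atoms, the bookkeeping in (a)$\Rightarrow$(b) is routine.
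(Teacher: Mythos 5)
Your proposal is correct and follows essentially the same route as the paper's proof: for (a)$\Rightarrow$(b) you take a maximal factorization into nonconstant (hence indecomposable) factors, divide each by a maximal common divisor of its coefficients to obtain atoms of $S[x]$, and apply almost atomicity of $S$ to the accumulated constant; for (b)$\Leftrightarrow$(c) you use the decomposition $f = x^d h$ with $\ord h = 0$ and the correspondence between atoms of $S[x^{\pm 1}]$ and order-zero atoms of $S[x]$, with the order-additivity argument forcing the manufactured factorization to descend to $S[x]$. The only differences are cosmetic bookkeeping (you normalize the atoms' orders directly where the paper first normalizes the multiplier $b$ using the primality of $x$, and you spell out why primitive indecomposable polynomials are atoms, which the paper asserts), so there is nothing substantive to flag.
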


\begin{proof}
	(a) $\Rightarrow$ (b): Let $f$ be a nonunit element of $S[x]^*$ such that $\deg f = n$ for some $n \in \nn_0$. Assume first that $n = 0$, which means that $f \in S^*$. In light of the almost atomicity of $S$, we can take an element $b_0 \in S^*$ that factors into atoms in $S$ such that $b_0f$ also factors into atoms in $S$. Since $\langle\mathcal{A}(S)\rangle \subseteq \langle\mathcal{A}(S[x])\rangle$, both $b_0$ and $b_0f$ factor into atoms in $S[x]$. Assume now that $n \ge 1$. Write $f = f_1 \cdots f_m$, where $f_i \in S[x]$ and $\deg f_i \ge 1$ for each $i \in \llbracket 1,m \rrbracket$. Suppose, without loss of generality, that $m$ is as large as it can possibly be. Fix an arbitrary $j \in \llbracket 1,m \rrbracket$. By the maximality of $m$, the polynomial $f_j$ is indecomposable in $S[x]$. Now write $f_j = \sum_{i=0}^n c_i x^i$ with coefficients $c_0, \ldots, c_n \in S$. Take $d_j  \in \mcd(c_0, \ldots, c_n)$, which exists by the assumed hypothesis, and note that $d_j^{-1}f_j \in \mathcal{A}(S[x])$. Since $S$ is almost atomic, there exists $b \in S^*$, which factors into atoms in $S$, such that $b d_1 \cdots d_m$ also factors into atoms in $S$. As $\langle\mathcal{A}(S)\rangle \subseteq \langle\mathcal{A}(S[x])\rangle$, both $b$ and $b d_1 \cdots d_m$ factor into atoms in $S[x]$. Therefore $bf = (b d_1 \cdots d_m) \prod_{i=1}^m d_i^{-1} f_i$, and so $bf$ factors into atoms in $S[x]$. Hence $S[x]$ is almost atomic.
	\smallskip
	
	(b) $\Rightarrow$ (c): First, observe that $\mathcal{A}(S[x]) \setminus \{ux \mid u \in S^\times\} \subseteq \mathcal{A}(S[x^{\pm 1}])$. Now let $f$ be a nonunit element of $S[x^{\pm 1}]^*$, and write $f = x^k g$ for some $k \in \zz$ and $g \in S[x]$ such that $\text{ord} \, g = 0$. Since $S[x]$ is almost atomic, there exists $b \in S[x]^*$, which factors into atoms in $S[x]$, such that $b g$ also factors into atoms in $S[x]$. Because $x$ is a prime element in $S[x]$, we can assume, without loss of generality, that $\text{ord} \, b = 0$. Since both $b$ and $bg$ factor into atoms in $S[x]$ and $\ord \, b = \text{ord} \, bg = 0$, the fact that $\mathcal{A}(S[x]) \setminus \{ux \mid u \in S^\times\}$ is a subset of $\mathcal{A}(S[x^{\pm 1}])$ guarantees that both $b$ and $bg$ also factor into atoms in $S[x^{\pm 1}]$. As a consequence, $S[x^{\pm 1}]$ is almost atomic.
	\smallskip
	
	(c) $\Rightarrow$ (b): Let $f$ be a nonunit element in $S[x]^*$. First, assume that $\text{ord} \, f = 0$. Because $S[x^{\pm 1}]$ is almost atomic, there exists $b \in S[x^{\pm 1}]$, which factors into atoms in $S[x^{\pm 1}]$ such that $bf$ is also factors into atoms in $S[x^{\pm 1}]$. As $x^k \in S[x^{\pm 1}]^{\times}$ for every $k \in \zz$, we may assume $\ord\, b = 0$, which implies that $\ord \, bf = 0$. If $b \in S[x^{\pm 1}]^{\times}$, then $b \in S^{\times}$, which trivially implies that $b$ factors into atoms in $S[x]$. Otherwise, we can write $b = a_1 \cdots a_n$ for some $a_1, \dots, a_n \in \mathcal{A}(S[x^{\pm 1}])$. Because $\text{ord} \, b = 0$, we can assume, without loss of generality, that $\ord \,a_i = 0$ for every $i \in \llbracket 1,n \rrbracket$. Therefore $a_1, \dots, a_n \in \mathcal{A}(S[x])$, and so $b$ factors into atoms in $S[x]$. As $\text{ord} \, bf = 0$, we can repeat the same argument to see that $bf$ factors into atoms in $S[x]$. 
	
	Finally, assume that $\text{ord} \, f = k \ge 1$. In this case, by the argument given in the previous paragraph, there exists $b \in S[x]$, which factors into atoms in $S[x]$, such that $b x^{-k} f$ also factors into atoms in $S[x]$. Since $x \in \mathcal{A}(S[x])$, both $b$ and $bf = x^k(bx^{-k} f)$ factor into atoms in $S[x]$.
	
	Hence we conclude that $S[x]$ is almost atomic.
\end{proof}

As a consequence of Theorem~\ref{thm:ascent of almost atomicity}, we see that, in the class consisting of all GCD-semidomains, the property of being almost atomic ascends to both polynomial extensions and Laurent polynomial extensions. In general, we do not know whether the polynomial extension of an almost atomic semidomain is almost atomic, so we pose the following question.

\begin{question} \label{quest:ascending of almost atomicity}
	Is there an almost atomic semidomain $S$ such that $S[x]$ is not almost atomic?
\end{question}

We conclude this section by providing an example of an antimatter semidomain $S$ whose polynomial extension $S[x]$ is almost atomic.

\begin{example}
	Consider the positive semiring $S = \{0\} \cup \qq_{\geq 1}$, which is antimatter~(see \cite[Example~3.10]{BCG21}). We shall prove that $S[x]$ is almost atomic. Take an arbitrary nonunit element $f \in S[x]^*$, and observe that we can write $f = cg$, where $c \in \qq_{\geq 1}$ and $g =  \sum_{i=0}^n c_i x^i \in S[x]^*$ with $c_j = 1$ for some $j \in \llbracket 0,n \rrbracket$. Therefore proving that $S[x]$ is almost atomic amounts to arguing that every element of $\qq_{\geq 1}$ and every nonconstant polynomial $\sum_{i=0}^n c_i x^i \in S[x]^*$ with $c_j = 1$ for some $j \in \llbracket 0,n \rrbracket$ can be expressed as quotients of two finite products of atoms of $S[x]$. Let us start with the latter case. To do so, fix a nonconstant polynomial $g = \sum_{i=0}^n c_i x^i \in S[x]^*$ with $c_j = 1$ for some $j \in \llbracket 0,n \rrbracket$. Then write $g = f_1 \cdots f_m$ as a product of indecomposable polynomials $f_1, \ldots, f_m \in S[x]$. Since $c_j = 1$ for some $j \in \ldb 0, n \rdb$, the only element of $S^* = \qq_{\ge 1}$ that divides all coefficients of $g$ must be $1$, and so the same follows for each of the factors $f_1, \dots, f_m$. This, along with the fact that the polynomials $f_1, \dots, f_m$ are indecomposable, ensures that $f_1, \dots, f_m \in \mathcal{A}(S[x])$. To tackle the former case, fix $c \in \qq_{\geq 1}$, and then write
	\begin{equation} \label{eq: decomposition}
		c = \frac{(cx + 1)(x + c)}{x^2 +\left(c + \frac{1}{c}\right)x + 1},
	\end{equation}
	where each of the polynomials in~\eqref{eq: decomposition} factors into atoms in $S[x]$ by the previous argument. Hence we conclude that $S[x]$ is almost atomic.
\end{example}

\bigskip
\section{Quasi-atomicity}
\label{sec:quasi-atomicity}

In this section, we provide an ideal-theoretical characterization of quasi-atomic semidomains, and then we study when quasi-atomicity ascends from a semidomain to its polynomial extension and its Laurent polynomial extension. Finally, we use quasi-atomicity to offer a stronger version of the known fact that every atomic GCD-domain is a UFD.
\smallskip

The fact that almost atomic semidomains are quasi-atomic follows immediately from the corresponding definitions. We proceed to construct a quasi-atomic semidomain that is neither an integral domain nor an almost atomic semidomain (the provided construction is based on that given in \cite[Example~7]{NL19} for integral domains). In our construction, we need the notion of a semifield. A \emph{semifield} is a semidomain in which every nonzero element has a multiplicative inverse. For instance, the semidomains $\qq_{\ge 0}$ and $\rr_{\ge 0}$ are both semifields.

\begin{example} \label{ex: quasi-atomic does not imply almost atomic}
 	Let $S$ be a BFS that is not a semifield (for instance,~$\nn_0$). Let $K$ be a field properly containing the field of fractions of $\mathcal{G}(S)$, and then consider the semidomain
 	\[
 		R := S[x] + x^2K[x] = S + Sx + x^2K[x].
 	\]
 	Observe that $R$ is an integral domain if and only if $S$ is an integral domain. Take an arbitrary polynomial $f = \sum_{i=0}^n c_i x^i \in R^*$, and set $m := \ord\, f$. We shall prove that $f$ factors into atoms in $R$ if and only if $c_m \in S$. 
	
	For the direct implication, assume that $c_m \not\in S$, and write $f = g_1 \cdots g_\ell$ with $g_1, \ldots, g_\ell \in R^*$. As $c_m \notin S$, we see that for some $j \in \llbracket 1,\ell \rrbracket$ the coefficient corresponding to the term $x^{\ord\, g_j}$ in $g_j$ is not an element of $S$. This implies that $\ord \, g_j \ge 2$. Thus, $g_j \notin S$ and every element of $S^*$ divides~$g_j$ in~$R$. Observe that $R^{\times} = S^{\times}$. Since $S$ is not a semifield, some nonunit of $S$ divides~$g_j$ in~$R$, and so $g_j \notin \mathcal{A}(R)$. Hence we conclude that $f$ cannot factor into atoms in $R$. 
	
	To argue the reverse implication, assume that $c_m \in S$, and then write $f = g_1 \cdots g_\ell$, where $g_i \not\in R^{\times}$ for any $i \in \llbracket 1,\ell \rrbracket$. Since $S$ is a BFS, we see that if $m = 0$ (resp., $m = 1$), then the inequality $\ell \le n + \max \mathsf{L}(c_0)$ (resp., $\ell \le n + \max \mathsf{L}(c_1)$) holds: indeed, for each $i \in \llbracket 1, \ell \rrbracket$, either $\deg g_i \ge 1$ or $g_i$ is a divisor of $c_0$ (resp., $c_1$) in $S$ that is not a unit. Consequently, if $m \in \{0,1\}$, then $f$ factors into atoms in~$R$. On the other hand, suppose that $m \ge 2$. Then write $f = x^{m - 1} c_m g$, where $g := x + \frac{c_{m+1}}{c_m}x^2 + \dots + \frac{c_n}{c_m} x^{n-m+1}$. Since $\ord \, g = 1$, we can mimic our previous argument to conclude that $g$ factors into atoms in $R$. In addition, because $S^*$ is a divisor-closed submonoid of $R^*$ and $S$ is a BFS, $c_m$ factors into atoms in $R$. These two last observations, together with the fact that $x \in \mathcal{A}(R)$, allow us to conclude that $f$ factors into atoms in $R$.
	
	Observe now that if $c_m \not\in S$, then $\frac{x^2}{c_m} f$ factors into atoms in $R$. This, along with the fact that $f$ factors into atoms when $c_m \in S$, implies that $R$ is quasi-atomic. On the other hand, if $c_m$ is not in the field of fractions of $\mathcal{G}(S)$, then for any $a_1, \dots, a_k \in \mathcal{A}(R)$ the element $h := a_1 \cdots a_k f$ does not factor into atoms in $R$ as the constant coefficient of $x^{-\text{ord} \, h} h$ does not belong to $S$. Consequently, the semidomain~$R$ is not almost atomic.
\end{example}

We turn to provide a characterization of quasi-atomic semidomains. To do so, we mimic the proof of \cite[Theorem~8]{NL19}.

\begin{theorem}
	A semidomain $S$ is quasi-atomic if and only if every nonzero prime ideal of $S$ contains an atom.
\end{theorem}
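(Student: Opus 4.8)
The plan is to prove both implications directly, with the reverse one carrying essentially all the weight through a prime-avoidance argument via Zorn's lemma, adapted to the semiring setting.

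For the forward implication I would argue as follows. Let $\mathfrak{p}$ be a nonzero prime ideal of $S$, and choose any nonzero $b \in \mathfrak{p}$. Since $\mathfrak{p}$ is proper, $b$ cannot be a unit, so the quasi-atomicity of $S$ supplies some $c \in S^*$ with $bc = a_1 \cdots a_n$ for atoms $a_1, \dots, a_n \in \mathcal{A}(S)$. As $\mathfrak{p}$ is an ideal we have $bc \in \mathfrak{p}$, and as $\mathfrak{p}$ is prime a routine induction on $n$ forces some $a_i \in \mathfrak{p}$; hence $\mathfrak{p}$ contains an atom. The only point to record carefully is that $n \geq 1$, which holds because a product of atoms is never a unit (and in any case $1 \notin \mathfrak{p}$), so the factorization is nonempty.

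For the reverse implication I would prove the contrapositive, and this is where the real work lies. Suppose $S$ is not quasi-atomic and fix a nonunit $b_0 \in S^*$ such that $b_0 c$ fails to factor into atoms for every $c \in S^*$. Consider the submonoid $W$ of $S^*$ generated by $\mathcal{A}(S) \cup S^\times$; concretely, $W = \{u a_1 \cdots a_n : u \in S^\times,\ n \in \nn_0,\ a_1, \dots, a_n \in \mathcal{A}(S)\}$, which is multiplicatively closed, contains $1$, and contains every atom. The first key step is to verify that the principal ideal $b_0 S$ is disjoint from $W$: any element of $b_0 S$ is either $0$, which is not in $W \subseteq S^*$, or of the form $b_0 s$ with $s \in S^*$, and were $b_0 s \in W$ we could absorb the unit into an atom to exhibit $b_0 s$ as a product of atoms when $n \geq 1$, or conclude that $b_0$ is a unit when $n = 0$, both impossible by the choice of $b_0$. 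The second and main step is to apply Zorn's lemma to the family of ideals of $S$ that contain $b_0 S$ and are disjoint from $W$; this family is nonempty and closed under unions of chains, so it admits a maximal element $\mathfrak{p}$. I would then check that $\mathfrak{p}$ is prime exactly as in the ring-theoretic case: it is proper because $1 \in W$, and if $xy \in \mathfrak{p}$ with $x, y \notin \mathfrak{p}$, then each of the strictly larger ideals $\mathfrak{p} + xS$ and $\mathfrak{p} + yS$ meets $W$, say at $w_1 = p_1 + x s_1$ and $w_2 = p_2 + y s_2$; expanding $w_1 w_2$ shows every term lies in $\mathfrak{p}$ (using $xy \in \mathfrak{p}$), contradicting $w_1 w_2 \in W$ and $\mathfrak{p} \cap W = \emptyset$. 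Since $\mathfrak{p} \supseteq b_0 S \ni b_0 \neq 0$, this $\mathfrak{p}$ is a nonzero prime ideal containing no atom, establishing the contrapositive.

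The main obstacle I anticipate is not conceptual but bookkeeping over a semiring: I must confirm that the standard commutative-algebra machinery remains legitimate when subtraction is unavailable. In particular, I need that $b_0 S$ and $\mathfrak{p} + xS$ really are ideals in the sense defined in the excerpt (closure under addition and under multiplication by $S$), that a union of a chain of $W$-disjoint ideals is again $W$-disjoint, and that the expansion of $w_1 w_2$ lands in $\mathfrak{p}$ without invoking any cancellation. Each of these is a short positive verification, but precisely because there are no additive inverses I cannot take any of them for granted and must argue each containment directly.
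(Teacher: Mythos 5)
Your proof is correct and takes essentially the same route as the paper's: the forward direction is identical, and your reverse direction is the same argument via an ideal maximal among those disjoint from the multiplicative set of products of atoms (your $W$ coincides with the paper's set $A$, since a unit absorbs into an adjacent atom). The only difference is that you verify by hand, with the $w_1 w_2$ expansion, that such a maximal ideal is prime in the semiring setting, whereas the paper outsources exactly that step to \cite[Proposition~7.12]{JG1999}.
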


\begin{proof}
	For the direct implication, suppose that $S$ is quasi-atomic. Let $P$ be a nonzero prime ideal of $S$. Take a nonzero $r \in P$ (clearly, $r \not\in S^{\times}$). Since $S$ is quasi-atomic, there exist $b \in S^*$ and $a_1, \ldots, a_n \in \mathcal{A}(S)$ such that $a_1 \cdots a_n = b r \in P$. Because $P$ is a prime ideal, $a_i \in P$ for some $i \in \llbracket 1,n \rrbracket$. Thus, each nonzero prime ideal of $S$ contains an atom. 
	
	For the reverse implication, suppose that every nonzero prime ideal of $S$ contains an atom. Now assume towards a contradiction that $S$ is not quasi-atomic. Let $A$ be the multiplicative subset of $S$ consisting of all the elements that can be factored into atoms (this set includes all the units of $S$). As~$S$ is not quasi-atomic, we can pick $r \in S^*$ such that none of the elements in $Sr$ factors into atoms in~$S$. Thus, $Sr$ is a nonzero ideal of $S$ disjoint from $A$. Among all the ideals of~$S$ disjoint from $A$, let $P$ be maximal and, therefore, a nonzero ideal. By virtue of \cite[Proposition~7.12]{JG1999}, the ideal~$P$ is prime. Since~$P$ is disjoint from $A$, it contains no atoms, which is a contradiction.
\end{proof}

It turns out that under the same divisibility condition stated in Theorem~\ref{thm:ascent of almost atomicity} for the ascent of almost atomicity, the property of being quasi-atomic also ascends from a given semidomain to both its polynomial extension and its Laurent polynomial extension. This assertion is included in the following result.

\begin{theorem} \label{thm:ascent of quasi-atomicity}
	For a semidomain $S$, each of the following statements implies the next one.
	\begin{enumerate}
		\item[(a)] $S$ is quasi-atomic and, for any indecomposable polynomial of the form $\sum_{i=0}^n c_i x^i \in S[x]^*$, the set $\emph{\mcd}_S(c_0, \dots, c_n)$ is nonempty.
		\smallskip
		
		\item[(b)] $S[x]$ is quasi-atomic.
		\smallskip
		
		\item[(c)] $S[x^{\pm 1}]$ is quasi-atomic. 
	\end{enumerate}
	Moreover, conditions (b) and (c) are equivalent.
\end{theorem}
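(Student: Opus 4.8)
The plan is to adapt, essentially verbatim, the argument used for Theorem~\ref{thm:ascent of almost atomicity}, exploiting the fact that quasi-atomicity is a weaker requirement: in order to factor some $cf$ into atoms we are now free to choose the multiplier $c$ without insisting that $c$ itself factor into atoms. This single relaxation is what lets the hypothesis ``$S$ is almost atomic'' be downgraded to ``$S$ is quasi-atomic'' throughout, while the degree/order bookkeeping stays the same.

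For (a) $\Rightarrow$ (b), I would take a nonzero nonunit $f \in S[x]^*$ of degree $n$. When $n=0$ we have $f \in S^*$, so quasi-atomicity of $S$ produces $c \in S^*$ with $cf$ a product of atoms of $S$; since $S^*$ is divisor-closed in $S[x]^*$ and $\mathcal{A}(S[x]) \cap S = \mathcal{A}(S)$, this is already a product of atoms of $S[x]$. When $n \ge 1$, I would write $f = f_1 \cdots f_m$ with each $\deg f_i \ge 1$ and $m$ as large as possible; since degrees add, $m \le n$, so a maximal such $m$ exists and forces every $f_i$ to be indecomposable. For each $j$, choosing a maximal common divisor $d_j$ of the coefficients of $f_j$ (which exists by hypothesis) makes $d_j^{-1} f_j$ an indecomposable polynomial whose coefficients admit only unit common divisors, hence an atom of $S[x]$, exactly as in Theorem~\ref{thm:ascent of almost atomicity}. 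Now, instead of invoking almost atomicity, I would apply quasi-atomicity of $S$ to the constant $d_1 \cdots d_m \in S^*$ to obtain $c \in S^*$ with $c\, d_1 \cdots d_m$ a product of atoms of $S$; then the identity $cf = (c\, d_1 \cdots d_m) \prod_{i=1}^m d_i^{-1} f_i$ exhibits $cf$ as a product of atoms of $S[x]$, and $c \in S[x]^*$ is the required multiplier.

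For the equivalence of (b) and (c), I would reuse the order-and-unit bookkeeping from Theorem~\ref{thm:ascent of almost atomicity}, built on the facts that $x$ is prime in $S[x]$, that $x^k \in S[x^{\pm 1}]^\times$ for all $k \in \zz$, and that an order-$0$ atom of $S[x^{\pm 1}]$ lying in $S[x]$ is an atom of $S[x]$. For (b) $\Rightarrow$ (c), given a nonunit $f = x^k g \in S[x^{\pm 1}]^*$ with $\ord g = 0$, quasi-atomicity of $S[x]$ gives $b \in S[x]^*$ with $bg$ a product of atoms of $S[x]$; since $\ord(bg) = 0$ each such atom has order $0$ — in particular none is of the form $ux$ — and hence lies in $\mathcal{A}(S[x^{\pm 1}])$, so $bf = x^k(bg)$ factors into atoms of $S[x^{\pm 1}]$ after absorbing the unit $x^k$. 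For (c) $\Rightarrow$ (b), I would first treat a nonunit $f \in S[x]^*$ with $\ord f = 0$: quasi-atomicity of $S[x^{\pm 1}]$ gives $b$ with $bf$ a product of atoms, and multiplying $b$ by a suitable power of $x$ I may assume $\ord b = 0$, so $b \in S[x]$ and $\ord(bf) = 0$; normalizing each atom of $bf$ to order $0$ keeps their product equal to $bf$ and turns each factor into an atom of $S[x]$, whence $bf$ factors into atoms of $S[x]$. The case $\ord f = k \ge 1$ then reduces to the previous one by writing $f = x^k h$ with $\ord h = 0$ and using that $x \in \mathcal{A}(S[x])$.

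I expect the only delicate points to be exactly the two already handled in Theorem~\ref{thm:ascent of almost atomicity}: first, verifying that $d_j^{-1} f_j$ is an atom of $S[x]$, which rests on the equivalence between a maximal common divisor of the coefficients being a unit and the coefficients admitting no nonunit common divisor; and second, the normalization of atoms of $S[x^{\pm 1}]$ to order $0$ when the product they form has order $0$, which is what allows an $S[x^{\pm 1}]$-factorization to descend to an $S[x]$-factorization. Neither presents a genuinely new obstacle beyond the almost-atomic case — the substantive simplification is precisely that the multiplier need no longer factor into atoms, so quasi-atomicity of $S$ suffices where almost atomicity was previously invoked.
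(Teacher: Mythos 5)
Your proposal is correct and follows essentially the same route as the paper's proof: a maximal factorization into indecomposables, extraction of maximal common divisors to produce atoms of $S[x]$, and the same order-normalization bookkeeping (primality of $x$, the units $x^k$, and order-$0$ atoms of $S[x^{\pm 1}]$ being atoms of $S[x]$) for the equivalence of (b) and (c). The only cosmetic difference is that you invoke quasi-atomicity once for the product $d_1 \cdots d_m$, whereas the paper invokes it for each $d_j$ separately and sets $b := b_1 \cdots b_m$ --- an immaterial variation.
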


\begin{proof}
	(a) $\Rightarrow$ (b): Let $f$ be a nonunit element in $S[x]^*$, and set $n := \deg f$. We will argue the existence of $b \in S[x]^*$ such that $bf$ factors into atoms in $S[x]$. If $n = 0$, then our result follows immediately as~$S$ is quasi-atomic and $S^*$ is a divisor-closed submonoid of $S[x]^*$. Therefore we assume that $n \ge 1$. Write $f = f_1 \cdots f_m$, where $f_i \in S[x]$ and $\deg f_i \ge 1$ for every $i \in \llbracket 1,m \rrbracket$. Without loss of generality, assume that $m$ has been taken as large as it can possibly be. Fix an arbitrary $j \in \llbracket 1,m \rrbracket$. It follows from the maximality of $m$ that the polynomial $f_j$ is indecomposable. Now write $f_j = \sum_{i=0}^n c_i x^i $ for some coefficients $c_0, \ldots, c_n \in S$. By the assumed hypothesis, we can take $d_j \in \mcd_S(c_0, \ldots, c_n)$. Observe that $d_j^{-1}f_j \in \mathcal{A}(S[x])$. Since $S$ is quasi-atomic, there exists $b_j \in S^*$ such that $b_j d_j$ factors into atoms in~$S$. Now set $b := b_1 \cdots b_m$. It is clear that $b \in S[x]^*$ and, moreover, the equality $bf = \prod_{i=1}^m(b_i d_i) (d_i^{-1} f_i)$ illustrates that $bf$ factors into atoms in $S[x]$. Hence $S[x]$ is quasi-atomic.
	\smallskip
	
	(b) $\Rightarrow$ (c): Let $f$ be a nonunit element of $S[x^{\pm 1}]^*$, and write $f = x^k g$ for some $k \in \zz$ and $g \in S[x]$ such that $\text{ord} \, g = 0$. As $S[x]$ is quasi-atomic, there exists $b \in S[x]^*$ such that $b g$ can be written as a product of atoms in $S[x]$. Because $x$ is a prime element in $S[x]$, we can assume, without loss of generality, that $\text{ord} \, b = 0$, which implies that $\text{ord} \, bg = 0$. Therefore the fact that $\mathcal{A}(S[x]) \setminus \{ux \mid u \in S^\times\}$ is a subset of $\mathcal{A}(S[x^{\pm 1}])$ ensures that $bg$ factors into atoms in $S[x^{\pm 1}]$. Hence $S[x^{\pm 1}]$ is quasi-atomic.
	\smallskip
	
	(c) $\Rightarrow$ (b): Let $f$ be a nonunit element in $S[x]^*$. Assume first that $\text{ord} \, f = 0$. Since $S[x^{\pm 1}]$ is quasi-atomic, there exists $b \in S[x^{\pm 1}]^{*}$ such that $bf$ factors into atoms in $S[x^{\pm 1}]$. As $x^k \in S[x^{\pm 1}]^{\times}$ for every $k \in \zz$, we may assume $\ord\, b = 0$, which implies that $\ord\, bf = 0$. If $bf \in S[x^{\pm 1}]^{\times}$, then $bf \in S^{\times}$, and so $bf$ can be trivially written as a product of atoms in $S[x]$. Otherwise, we can write $bf = a_1 \cdots a_n$ for some $a_1, \dots, a_n \in \mathcal{A}(S[x^{\pm 1}])$. Since $\text{ord} \, bf = 0$, we can assume, without loss of generality, that $\ord \,a_i = 0$ for every $i \in \llbracket 1,n \rrbracket$. This implies that $a_i \in \mathcal{A}(S[x])$ for every $i \in \llbracket 1,n \rrbracket$. Hence $bf$ factors into atoms in $S[x]$. Finally, we can reduce the case where $\text{ord} \, f \ge 1$ to the case where $\text{ord} \, f = 0$ as we did in the proof of Theorem~\ref{thm:ascent of almost atomicity}. Hence $S[x]$ is quasi-atomic.
\end{proof}

As for the property of almost atomicity, we do not know in general whether the polynomial extension of a quasi-atomic semidomain is again quasi-atomic. 
\smallskip

Observe that, as an immediate consequence of Theorem~\ref{thm:ascent of quasi-atomicity}, inside the class of all GCD-semidomains quasi-atomicity ascends to both polynomial extensions and Laurent polynomial extensions. It is well known that a monoid is a UFM if and only if it is an atomic GCD-monoid. The following proposition gives another similar characterization of a UFM using quasi-atomicity.

\begin{prop} \label{prop: UFM characterization}
	Let $M$ be a monoid. Then $M$ is a UFM if and only if $M$ is a quasi-atomic GCD-monoid.
\end{prop}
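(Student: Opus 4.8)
The plan is to prove both implications, with the forward direction (UFM implies quasi-atomic GCD-monoid) being essentially routine and the reverse direction (quasi-atomic GCD-monoid implies UFM) carrying the real content. For the forward direction, recall that a UFM is by definition atomic, and it is a classical fact that every UFM is a GCD-monoid; since every atomic monoid is almost atomic and every almost atomic monoid is quasi-atomic (as noted in the background section), a UFM is in particular quasi-atomic. So this direction follows immediately from standard facts.

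For the reverse direction, suppose $M$ is a quasi-atomic GCD-monoid. The key reduction is to show that $M$ is atomic, since it is classical that an atomic GCD-monoid is a UFM (the GCD hypothesis forces every atom to be prime, and atomicity plus primality of atoms yields unique factorization). Thus the heart of the argument is upgrading quasi-atomicity to genuine atomicity using the GCD structure. First I would record the lemma that in any GCD-monoid every atom is prime: if $a$ is an atom and $a \mid_M bc$ but $a \nmid_M b$, then $\gcd_M(a,b) \in \uu(M)$ (since the only divisors of $a$ are units and associates of $a$), and the standard GCD identity $\gcd_M(ac, bc) = c\gcd_M(a,b)$ lets one conclude $a \mid_M c$.

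The main step is then: \emph{in a quasi-atomic GCD-monoid, every nonunit factors into atoms.} Take a nonunit $b \in M$. By quasi-atomicity there is $c \in M$ with $bc = a_1 \cdots a_n$ a product of atoms. Since each $a_i$ is prime (by the GCD lemma), I would argue by induction on $n$ that any divisor of a product of primes is itself (up to units) a product of a subcollection of those primes: each prime $a_i$ either divides $b$ or divides $c$, and one can peel off the primes dividing $b$ one at a time, cancelling (using that $M$ is cancellative) to reduce $n$. This shows $b$ is, up to a unit, a product of some of the $a_i$, hence $b$ factors into atoms. Therefore $M$ is atomic, and being an atomic GCD-monoid it is a UFM.

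The step I expect to be the main obstacle is the careful induction showing that a divisor of a product of primes is again a product of primes; the subtlety is bookkeeping with associates and units (working in $M_{\red}$ may streamline this) and invoking cancellativity correctly when stripping off prime factors. The GCD-to-prime lemma and the final "atomic GCD implies UFM" step are standard, so I would either cite them or dispatch them quickly, reserving the detailed work for the prime-divisor induction that converts the single quasi-atomic witness $bc$ into an honest atomic factorization of $b$.
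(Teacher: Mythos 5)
Your proposal is correct and takes essentially the same route as the paper: reduce the reverse implication to showing $M$ is atomic, use that atoms in a GCD-monoid are prime (the paper cites \cite[Theorem~6.7(2)]{rG84} where you prove this directly), deduce from $bc = p_1 \cdots p_n$ that each $p_i$ divides $b$ or $c$ so that $b$ is, up to associates, a subproduct of the $p_i$, and then invoke the classical fact that an atomic GCD-monoid is a UFM. Your explicit cancellation induction and the GCD-to-prime lemma merely spell out details the paper dispatches by citation.
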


\begin{proof}
	The direct implication follows immediately. As for the reverse implication, it is well known that an atomic GCD-monoid is a UFM (see, for example, \cite[Section~10.7]{fHK}). Thus, it suffices to show that~$M$ is atomic. To do so, let $b$ be a nonunit element of $M$. Since $M$ is quasi-atomic, there exists $c \in M^{\bullet}$ such that $bc$ factors into atoms in $M$. Write $bc = p_1 \cdots p_n$ for some $p_1, \ldots, p_n \in \mathcal{A}(M)$. It follows from \cite[Theorem~6.7(2)]{rG84} that $p_1, \dots, p_n$ are primes. Thus, for each $i \in \ldb 1,n \rdb$, either $p_i \mid_M b$ or $p_i \mid_M c$. Therefore $b$ must be the product of some of the factors $p_1, \dots, p_n$ up to associates. Hence $M$ is atomic.
\end{proof}

\begin{cor}
	Let $S$ be a semidomain. Then $S$ is a UFS if and only if it is a quasi-atomic GCD-semidomain.
\end{cor}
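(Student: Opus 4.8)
The plan is to deduce this corollary directly from Proposition~\ref{prop: UFM characterization} by specializing that monoid-level statement to the multiplicative monoid $S^*$ of the semidomain $S$. First I would record the preliminary observation that $(S^*, \cdot)$ is a monoid in the sense used throughout this paper: since $S$ is a subsemiring of an integral domain, the set $S^* = S \setminus \{0\}$ is closed under multiplication and is cancellative (the ambient integral domain has no zero divisors), and it is of course commutative with identity~$1$. Thus $S^*$ is a commutative cancellative monoid, which is precisely the kind of object to which Proposition~\ref{prop: UFM characterization} applies.

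Next I would translate each of the three semidomain-theoretic conditions appearing in the statement into the corresponding condition on the monoid $S^*$, using the definitions collected in Section~\ref{sec:background}. By definition, $S$ is a UFS exactly when $S^*$ is a UFM; $S$ is quasi-atomic exactly when $S^*$ is quasi-atomic; and $S$ is a GCD-semidomain exactly when $S^*$ is a GCD-monoid. With these identifications, the biconditional to be proved, namely that $S$ is a UFS if and only if $S$ is a quasi-atomic GCD-semidomain, becomes verbatim the biconditional that $S^*$ is a UFM if and only if $S^*$ is a quasi-atomic GCD-monoid. The latter is exactly the content of Proposition~\ref{prop: UFM characterization} applied to $M = S^*$, so the corollary follows at once.

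Since the result is obtained by substituting definitions into an already-established proposition, I do not anticipate any genuine obstacle here. The only point deserving a moment's care is the preliminary verification that $S^*$ is indeed a monoid in the required (cancellative, commutative) sense; this is immediate from the fact that $S$ embeds in an integral domain, which rules out zero divisors and hence guarantees cancellativity of $S^*$.
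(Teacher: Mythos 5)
Your proposal is correct and matches the paper's (implicit) argument exactly: the corollary is stated without proof precisely because it follows by applying Proposition~5.6 to the multiplicative monoid $S^*$ and unwinding the definitions of UFS, quasi-atomic semidomain, and GCD-semidomain, just as you do. Your preliminary check that $S^*$ is cancellative and commutative is a sensible touch but adds nothing beyond what the paper's setup already guarantees.
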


To ensure that a GCD-monoid is a UFM, some sort of subatomic property needs to be imposed. However, the property of being Furstenberg is not enough to guarantee that a GCD-semidomain is a UFS. This is illustrated in the following example.

\begin{example}\footnote{In this arXiv version, we have replaced the example discussed in the original paper, as published in Proceedings of the 2021 Graz Conference on Rings and Polynomials. The current example discusses the multiplicative structure of a semidomain, while the original example discusses the additive structure of a semidomain, which is less appropriate given the content of the paper. This is the only replacement in this arXiv version.}
	Let $M$ be the nonnegative cone of the totally ordered (additive) group $(\zz^2, \preceq)$, where~$\preceq$ denotes the lexicographical order with priority on the second coordinate:
	\[
		M := (\nn_0 \times \{0\}) \cup (\zz \times \nn).
	\]
	Since $M$ is the nonnegative cone of $\zz^2$, it is reduced. Moreover, for all $b,c \in M$ the divisibility relation $b \mid_M c$ holds if and only if $b \preceq c$ in $\zz^2$. Therefore the only atom of $M$ is the minimum of $M^\bullet$; that is, $\mathcal{A}(M) = \{(1,0)\}$. Set $a := (1,0)$. Because every element of $M^\bullet$ is divisible by $a$ in $M$, it follows that~$M$ is a Furstenberg monoid. On the other hand, $\nn_0 a  = \nn_0 \times \{0\} \subsetneq M$, and so $M$ is not atomic. Since $M$ is a valuation monoid, it is a GCD-monoid: indeed, $\gcd(b,c) = \min\{b,c\}$ for all $b,c \in M$.
	
	Let $\qq[x;M]$ denote the monoid ring of $M$ over $\qq$, that is, the ring consisting of all the polynomial expressions with exponents in $M$ and coefficients in $\qq$ (with addition and multiplication defined as for standard polynomials). Since $M$ is a torsion-free monoid, $\qq[x;M]$ is an integral domain and so a semidomain. Moreover, as $M$ is a GCD-monoid, it follows from \cite[Theorem~6.4]{GP74} that $\qq[x;M]$ is a GCD-domain. 
	
	Let us prove that $\qq[x;M]$ is also a Furstenberg domain. To do so, define a function $\varphi \colon \qq[x;M]^* \to \nn_0$ as follows: if $f := \sum_{i=1}^n q_i x^{(b_i, c_i)} \in \qq[x;M]$, where $(b_1, c_1) \prec \dots \prec (b_n, c_n)$ (and so $c_1 \le \dots \le c_n$), then set $\varphi(f) := c_n$. Observe that the set
	\[
		R := \{f \in \qq[x;M]^* \mid \varphi(f) = 0\}
	\]
	is a divisor-closed submonoid of the multiplicative monoid $\qq[x;M]^*$ that is isomorphic to the multiplicative monoid of the polynomial ring $\qq[x]$ (via the monoid homomorphism given by the natural assignments $x^{(b,0)} \mapsto x^b$ for every $b \in \nn_0$). Since $\qq[x]$ is a UFD, it follows that $R$ is a UFM. Now fix a nonunit $f \in \qq[x;M]^*$, and set $c := \varphi(f)$. Suppose first that $f$ is divisible in $\qq[x;M]$ by a nonunit element~$g$ with $\varphi(g) = 0$. As $g$ is also a nonunit in $R$, it follows that $g$ is divisible in $R$ by some $h \in \mathcal{A}(R)$, and so the fact that $R$ is a divisor-closed submonoid of $\qq[x;M]^*$ implies that $h$ is also an atom in $\qq[x;M]$. Thus, $h$ is an atom of $\qq[x;M]$ dividing $f$ in $\qq[x;M]$. Now we can suppose that~$f$ is not divisible by any nonunit $g \in \qq[x;M]$ with $\varphi(g) = 0$. Write $f = f_1 \cdots f_k$ for some nonunit elements $f_1, \dots, f_k \in \qq[x;M]$.  Observe that $k \le \sum_{i=1}^k \varphi(f_i) = \varphi(f)$. As a consequence, after assuming that $k$ has been taken as large as it can possibly be, we obtain that $f_1, \dots, f_k$ are atoms in $\qq[x;M]$. Therefore~$f$ is divisible by the atom $f_1$ in $\qq[x;M]$. Putting all together, we can conclude that $\qq[x;M]$ is a Furstenberg domain.
	
	Finally, let us show that the monoid ring $\qq[x;M]$ is not a UFD. To do this, first observe that $N := \{qx^m \mid q \in \qq^\times \text{ and } m \in M\}$ is a divisor-closed submonoid of the multiplicative monoid $\qq[x;M]^*$. Since the reduced monoid of $N$ is isomorphic to $M$, which is not atomic, it follows that $N$ is not atomic. Now the fact that $N$ is a divisor-closed submonoid of $\qq[x;M]^*$ implies that $\qq[x;M]$ is not atomic. 
	
	Summarizing, we have constructed a monoid ring $\qq[x;M]$ that is a Furstenberg GCD-domain but not a UFD.
\end{example}

\bigskip
\section*{Acknowledgments}

The authors would like to thank an anonymous referee for several comments and suggestions that helped improve an early version of this paper. During the preparation of this paper, the first author was supported by the NSF award DMS-1903069 and DMS-2213323, while the second author was partially supported by the University of Florida Mathematics Department Fellowship.

\bigskip


\begin{thebibliography}{20}
	
	\bibitem{AGH21} C. Aguilera, M. Gotti, and A. F. Hamelberg: \emph{Factorization in reciprocal Puiseux monoids}. Submitted. Preprint on arXiv: https://arxiv.org/abs/2112.04048
		
	\bibitem{ABP21} S. Albizu-Campos, J. Bringas, and H. Polo: \emph{On the atomic structure of exponential Puiseux monoids and semirings}, Comm. Algebra \textbf{49} (2021) 850--863.

	\bibitem{AAZ90} D.~D. Anderson, D.~F. Anderson, and M. Zafrullah: \emph{Factorization in integral domains}, J. Pure Appl. Algebra \textbf{69} (1990) 1--19.
	
	\bibitem{BCG21} N. R. Baeth, S. T. Chapman, and F. Gotti: \emph{Bi-atomic classes of positive semirings}, Semigroup Forum \textbf{103} (2021) 1--23. 
		
	\bibitem{BG20} N. R. Baeth and F. Gotti: \emph{Factorizations in upper triangular matrices over information semialgebras}, J. Algebra \textbf{562} (2020) 466--496.
	
	\bibitem{aB90} A. Baker: \emph{Transcendental Number Theory} (second ed.), Cambridge Mathematical Library, Cambridge University Press, 1990.
	
	\bibitem{BC15} J. G. Boynton and J. Coykendall: \emph{On the graph divisibility of an integral domain}, Canad. Math. Bull. \textbf{58} (2015) 449--458.

	\bibitem{hB13} H. Brunotte: \emph{On some classes of polynomials with nonnegative coefficients and a given factor}, Period. Math. Hungar. \textbf{67} (2013) 15--32.	

	\bibitem{CF19} F. Campanini and A. Facchini: \emph{Factorizations of polynomials with integral non-negative coefficients}, Semigroup Forum \textbf{99} (2019) 317--332.

	\bibitem{CCMS09} P. Cesarz, S. T. Chapman, S. McAdam, and G. J. Schaeffer: \emph{Elastic properties of some semirings defined by positive systems}. In: Commutative Algebra and Its Applications (Eds. M. Fontana, S. E. Kabbaj, B. Olberding, and I. Swanson), pp. 89--101, Proceedings of the Fifth International Fez Conference on Commutative Algebra and its Applications, Walter de Gruyter, Berlin, 2009.
	
	\bibitem{CGG20a} S. T. Chapman, F. Gotti, and M. Gotti: \emph{Factorization invariants of Puiseux monoids generated by geometric sequences}, Comm. Algebra \textbf{48} (2020) 380--396.
	
	\bibitem{CGG21} S. T. Chapman, F. Gotti, and M. Gotti: \emph{When is a Puiseux monoid atomic?}, Amer. Math. Monthly \textbf{128} (2021) 302--321.

	\bibitem{pC17} P. L. Clark: \emph{The Euclidean Criterion for irreducibles}, Amer. Math. Monthly \textbf{124} (2017) 198--216.
	
	\bibitem{pC68} P. M. Cohn: \emph{Bezout rings and and their subrings}, Proc. Cambridge Philos. Soc. \textbf{64} (1968) 251--264.
	
	\bibitem{CG22} J. Correa-Morris and F. Gotti: \emph{On the additive structure of algebraic valuations of polynomial semirings}, J. Pure Appl. Algebra \textbf{226} (2022) 107104.
		
	\bibitem{CDM99} J. Coykendall, D.~E. Dobbs, and B.~Mullins: \emph{On integral domains with no atoms}, Comm. Algebra \textbf{27} (1999) 5813--5831.

	\bibitem{GGT21} A. Geroldinger, F. Gotti, and S. Tringali: \emph{On strongly primary monoids, with a focus on Puiseux monoids}, J. Algebra \textbf{567} (2021) 310--345.

	\bibitem{GH06} A.~Geroldinger and F.~Halter-Koch: \emph{Non-unique Factorizations: Algebraic, Combinatorial and Analytic Theory}, Pure and Applied Mathematics Vol. 278, Chapman \& Hall/CRC, Boca Raton, 2006.
	
	\bibitem{rG84} R. Gilmer: \emph{Commutative Semigroup Rings}, The University of Chicago Press, 1984.
	
	\bibitem{GP74} R.~Gilmer and T.~Parker: \emph{Divisibility properties in semigroup rings}, Michigan Math. J. \textbf{21} (1974) 65--86.

	\bibitem{JG1999} J. S. Golan: \emph{Semirings and their Applications}, Kluwer Academic Publishers, 1999. 
	
	\bibitem{fG19} F.~Gotti: \emph{Increasing positive monoids of ordered fields are FF-monoids}, J. Algebra \textbf{518} (2019) 40--56.
	
	\bibitem{GG17} F. Gotti and M. Gotti: \emph{Atomicity and boundedness of monotone Puiseux monoids}, Semigroup Forum {\bf 96} (2018) 536--552.
	
	\bibitem{GL22} F. Gotti and B. Li: \emph{Divisibility in rings of integer-valued polynomials}, New York J. Math \textbf{28} (2022) 117--139.
	
	\bibitem{GL23} F. Gotti and B. Li: \emph{Atomic semigroup rings and the ascending chain condition on principal ideals}, Proc. Amer. Math. Soc. \textbf{151} (2023) 2291--2302.
	
	\bibitem{GP23} F. Gotti and H. Polo: \emph{On the arithmetic of polynomial semidomains}, Forum Mathematicum (to appear). Preprint on arXiv: https://arxiv.org/abs/2203.11478.
		
	\bibitem{GV23} F. Gotti and J. Vulakh: \emph{On the atomic structure of torsion-free monoids}, Semigroup Forum (to appear). Preprint on arXiv: https://arxiv.org/abs/2212.08347.
	
	\bibitem{GZ22} F. Gotti and M. Zafrullah: \emph{Integral domains and the IDF property}, J. Algebra \textbf{614} (2023) 564--591.
		
	\bibitem{fHK} F. Halter-Koch: \emph{Ideal Systems. An Introduction to Multiplicative Ideal Theory}, Marcel Dekker Inc., 1998.

	\bibitem{JLZ22} N. Jiang, B. Li, and S. Zhu: \emph{On the primality and elasticity of algebraic valuations of cyclic free semirings}, Internat. J. Algebra Comput. \textbf{33} (2023) 197--210.
	
	\bibitem{NL19} N. Lebowitz-Lockard: \emph{On domains with properties weaker than atomicity}, Comm. Algebra \textbf{47} (2019) 1862--1868.
	
	\bibitem{vP15} V. Ponomarenko: \emph{Arithmetic of semigroup semirings}, Ukr. Math. J. \textbf{67} (2015) 213--229.

	\bibitem{mR93} M. Roitman: \emph{Polynomial extensions of atomic domains}, J. Pure Appl. Algebra \textbf{87} (1993) 187--199.

	\bibitem{sZ22} S. Zhu: \emph{Factorizations in evaluation monoids of Laurent semirings}, Comm. Algebra \textbf{50} (2022) 2719--2730.

\end{thebibliography}
\end{document}